\newcommand{\bbZ}{{\Bbb Z}}
\newcommand{\bbR}{{\Bbb R}}
\newcommand{\bbN}{{\Bbb N}}
\newcommand{\norm}[1]{\left\|#1\right\|}
\renewcommand{\cite}{\citeyear}
\begin{document}

\title{On the vaguelet and Riesz properties of $L^2$-unbounded \\ transformations
of orthogonal wavelet bases
\thanks{The first author was supported in part by the Louisiana Board of Regents award LEQSF(2008-11)-RD-A-23. The third author was supported in part by the NSF grant DMS-0505628.}
\thanks{{\em AMS
Subject classification}. Primary: 42C40, 60G10, 60G15.}
\thanks{{\em Keywords and phrases}: wavelets, Riesz bases, vaguelets, unbounded transformations,
stochastic processes.} }

\author{Gustavo Didier\\ Tulane University \and  St\'{e}phane Jaffard \\ Universit\'{e} Paris-Est \\ Cr\'{e}teil Val-de-Marne  \and
Vladas Pipiras \\ University of North Carolina}

\bibliographystyle{agsm}

\maketitle

\begin{abstract}
In this work, we prove that certain $L^2$-unbounded transformations of orthogonal wavelet bases generate vaguelets. The $L^2$-unbounded functions involved in the transformations are assumed to be quasi-homogeneous at high frequencies. We provide natural examples of functions which are not quasi-homogeneous and for which the resulting transformations are not vaguelets. We also address the related question of whether the considered family of functions is a Riesz basis in $L^2(\bbR)$. The Riesz property could be deduced directly from the results available in the literature or, as we outline, by using the vaguelet property in the context of this work. The considered families of functions arise in wavelet-based decompositions of stochastic processes with uncorrelated coefficients.
\end{abstract}


\section{Introduction}
\label{s:intro}

A family of functions $\{f_{j,k}\}$ is called {\it vaguelets} (Donoho \cite{donoho:1995}, Coifman and Meyer \cite{coifman:meyer:1997}, p.\ 56) when they satisfy
\begin{equation}\label{e:Meyer_Coif_expression_5.1}
|f_{j,k}(t)| \leq C 2^{j/2} ( 1 + |2^j t - k|)^{-1 - \alpha_1} ,
\end{equation}
\begin{equation}\label{e:Meyer_Coif_expression_5.2}
\int_{\bbR}f_{j,k}(t)dt = 0,
\end{equation}
\begin{equation}\label{e:Meyer_Coif_expression_5.3}
|f_{j,k}(t)- f_{j,k}(t')| \leq C 2^{j(1/2 + \alpha_2)} |t- t'|^{\alpha_2},
\end{equation}
where the constant $C$ does not depend on $j$, $k$ and
\begin{equation}\label{e:cond_alpha_beta}
0 < \alpha_2 < \alpha_1 < 1.
\end{equation}
Property \eqref{e:Meyer_Coif_expression_5.1} describes the time localization of $f_{j,k}$. Property \eqref{e:Meyer_Coif_expression_5.2} sets the mean of $f_{j,k}$ to zero, thus implying that $f_{j,k}$ oscillates. Property \eqref{e:Meyer_Coif_expression_5.3} describes the regularity of $f_{j,k}$. Vaguelets share many properties with a standard wavelet basis. For instance, they are both indexed by scale and shift. Moreover, in the space $L^2(\bbR)$, vaguelets satisfy the useful inequality
\begin{equation}\label{e:bounds_vaguelets}
\Big\| \sum_{j,k} d_{j,k} f_{j,k} \Big\|^2_{L^2(\bbR)} \leq C \sum_{j,k} d^2_{j,k}
\end{equation}
for some $C > 0$ (e.g., Theorem 2 on p.\ 56 of Coifman and Meyer \cite{coifman:meyer:1997}). The bound \eqref{e:bounds_vaguelets} brings vaguelets close to Riesz bases. Recall that a family $\{e_l\}_{l \in \bbZ}$ of elements of a Hilbert space $\mathcal{H}$ is called a \textit{Riesz basis} if
\begin{itemize}
\item [$(i)$] there exist constants $C_2 \geq C_1 > 0$ such that, for all sequences $\{a_{l}\}_{l \in \bbZ} \in l^2(\bbZ)$,
\begin{equation} \label{e:Riesz_basis_equation_on_bounds}
C_1 \Big(\sum_{l \in \bbZ} |a_{l}|^2 \Big)^{1/2} \leq
\Big\|\sum_{l \in \bbZ} a_{l} e_{l}\Big\|_{\mathcal{H}} \leq
C_2 \Big(\sum_{l \in \bbZ} |a_{l}|^2 \Big)^{1/2};
\end{equation}
\item [$(ii)$] span$\{e_{l}\}_{l \in \bbZ}$ is dense in $\mathcal{H}$.
\end{itemize}
Apart from its natural place in Functional Analysis, vaguelets have also found applications, for instance, in Statistics (Donoho \cite{donoho:1995}).

Our focus is on the vaguelet property of the family $\{\Psi_{j,k}, j \geq 0, k \in \bbZ\} $, where
\begin{equation}\label{e:Psi_jk}
\Psi_{j,k} = \frac{\Psi^{\#}_{j,k}}{\| \Psi^{\#}_{j,k} \|}.
\end{equation}
Here and throughout, $\| \cdot \|$ denotes the usual $L^2(\bbR)$ norm and the functions $\Psi^{\#}_{j,k}$ are defined in the Fourier domain as
\begin{equation}\label{e:psihat_uppersharp}
\widehat{\Psi}^{\#}_{j,k}(x)= h_2(x) \widehat{\psi}_{j,k}(x),
\end{equation}
where $h_2$ is an appropriate complex-valued function and $\psi_{j,k}(t) = 2^{j/2}\psi(2^j t - k)$ is a scaled and translated copy of a wavelet $\psi$ (see, for example, Daubechies \cite{daubechies:1992}, Meyer \cite{meyer:1992}, Mallat \cite{mallat:1998}). We use the convention $\widehat{f}(x) = \int_{\bbR} e^{-ixt}f(t) dt$ for the Fourier transform of $f \in L^2(\bbR)$. The precise statement of the assumptions used in this paper can be found in Section \ref{s:assumptions-result}. In particular, it is \textit{not} assumed that $h_2, h^{-1}_2 $ are functions in $L^\infty(\bbR)$ or even $L^2(\bbR)$.

Starting from appropriate conditions on the functions $h_2$ and $\psi$, the proof that the functions
\begin{equation}\label{e:psi_jk_as_vaguelets}
\{\Psi_{j,k}, j \geq 0, k \in \bbZ \}
\end{equation}
 are vaguelets makes use of a standard argument in the Fourier domain (Section \ref{s:proof-vaguelet}). We suppose, in particular, that the function $h_2$ is \textit{quasi-homogeneous} in the sense that, for some $d \in \bbR$ and any $\epsilon > 0$, there are constants $C_2 \geq C_1 > 0$ (depending on $\epsilon$) such that
\begin{equation}\label{e:quasihomogeneity}
C_1 |x|^{-d} \leq |h_2(x)| \leq C_2 |x|^{-d}
\end{equation}
for $|x|> \epsilon$. Similar assumptions have been made in the works of Donoho \cite{donoho:1995} and others. In a more original direction, we also shed light on the necessity of the quasi-homogeneity assumption \eqref{e:quasihomogeneity}. We provide natural examples of functions $h_2$ which are not quasi-homogeneous and for which the resulting family are not vaguelets (Section \ref{s:necessity_frac_decay}).

We are also interested in whether the vaguelet family \eqref{e:psi_jk_as_vaguelets} leads to a Riesz basis in $L^2(\bbR)$. In order to generate a basis in $L^2(\bbR)$, we need to complement the family \eqref{e:psi_jk_as_vaguelets} with functions playing the role of approximation functions associated with $j=0$. Let
\begin{equation}\label{e:Phi_jk}
\Phi_{j,k} = \frac{\Phi^{\#}_{j,k}}{\| \Phi^{\#}_{j,k}\|}, \quad \widehat{\Phi}^{\#}_{j,k}(x) = h_1(x)\widehat{\phi}_{j,k}(x),
\end{equation}
where $h_1$ is a complex-valued function and $\phi_{j,k}(t) = 2^{j/2}\phi(2^j t - k)$ is a scaled and translated copy of a scaling function $\phi$. We are interested in the Riesz property (in $L^2(\bbR)$) of the family of functions
\begin{equation}\label{e:F_0}
\{\Phi_{0,k},\Psi_{j,k}, j \geq 0, k \in \bbZ\}.
\end{equation}
Establishing the Riesz property of the functions \eqref{e:F_0} is closely related to doing so for the ``dual" family
\begin{equation}\label{e:F^0}
\{\Phi^{0,k},\Psi^{j,k}, j \geq 0, k \in \bbZ\}
\end{equation}
with the ``dual" functions
\begin{equation}\label{e:biorth-func-transform-normalized}
 \Phi^{j,k} = \frac{ \Phi^{j,k}_\#}{\|
    \Phi^{j,k}_\#\|},\quad  \Psi^{j,k} = \frac{\Psi^{j,k}_\#}{\|
    \Psi^{j,k}_\#\|},
\end{equation}
where
\begin{equation}\label{e:biorth-func-transform}
    \widehat \Phi^{j,k}_\#(x) = \overline{h_1(x)^{-1}} \widehat \phi_{j,k}(x),\quad
    \widehat \Psi^{j,k}_\#(x) = \overline{h_2(x)^{-1}} \widehat \psi_{j,k}(x).
\end{equation}
Under the assumption that $h_2/h_1$ is $2\pi$-periodic and some other mild assumptions, the functions in \eqref{e:F_0} and \eqref{e:F^0} satisfy the biorthogonality relations
\begin{equation}\label{e:biortho_relation}
\int_{\bbR} \Phi^{0,k}(t) \Phi_{0,k'}(t) dt = \delta_{\{k=k'\}}, \quad
\int_{\bbR} \Psi^{j,k}(t) \Psi_{j',k'}(t) dt = \delta_{\{j=j',k=k'\}},
\end{equation}
as well as
\begin{equation}\label{e:orthog_of_Phi_and_Psi}
\int_{\bbR} \Phi^{0,k}(t) \Psi_{j,k'}(t) dt = 0,
\end{equation}
where $j',j \geq 0$, $k,k' \in \bbZ$. Our discussion of the Riesz property will involve both families of functions \eqref{e:F_0} and \eqref{e:F^0}.

The Riesz property for the families \eqref{e:F_0} and \eqref{e:F^0} can be deduced directly from the results on the Riesz property of orthogonal wavelet bases in Sobolev spaces. The latter has been a quite active research topic (see Han and Shen \cite{han:shen:2009} and references therein). We discuss the connection with the Riesz property in Sobolev spaces in Section \ref{s:Sobolev}. As pointed out above, vaguelets are naturally related to Riesz bases. We thus also outline how the Riesz property for \eqref{e:F_0} and \eqref{e:F^0} can be obtained in the vaguelet framework of this paper, without resorting to the results on Sobolev spaces (Section \ref{s:proof-riesz}).

This work was motivated by the fact that the bases \eqref{e:F_0} and \eqref{e:F^0} arise in wavelet decompositions of stochastic processes with uncorrelated wavelet coefficients. In the case of zero-mean stationary processes $X = \{X(t)\}_{t \in \bbR}$, bases of the form \eqref{e:F_0} and \eqref{e:F^0} give rise to uncorrelated wavelet coefficients as long as $|h_2(x)|^2$ is the Fourier transform of the autocovariance function $EX(0)X(u) = \gamma(u)$ (Zhang and Walter \cite{zhang:walter:1994}, Benassi, Jaffard and Roux \cite{benassi:jaffard:roux:1997}, Ruiz-Medina, Angulo and Anh \cite{ruiz-medina:angulo:anh:2003}). Moreover, the flexibility of taking $h_1$ different from $h_2$ has proved useful for constructing expansions of stochastic processes with \textit{correlated }approximation coefficients (and uncorrelated wavelet coefficients), which in turn were used for simulation (Meyer et al.\ \cite{MST:1999}, Pipiras \cite{pipiras:2005}, Didier and Pipiras \cite{didier:pipiras:2008}, Didier and Fricks \cite{didier:fricks:2012}). More specifically, in the Gaussian case such expansions have the general form
$$
X(t) = \sum^{\infty}_{k = -\infty} a_{J,k}  \Phi^{J,k} (t) +
\sum^{\infty}_{j=J} \sum^{\infty}_{k = -\infty} d_{j,k}  \Psi^{j,k}(t), \quad J \in \bbZ,
$$
where $\{ d_{j,k}\}_{j \geq 0, k \in \bbZ}$ are independent standard Normal random variables, but $\{ a_{J,k} \}_{k \in \bbZ}$ are allowed to be correlated. Zhang and Walter \cite{zhang:walter:1994}, p.\ 1742, were the first to conjecture that the families of functions \eqref{e:F_0} and \eqref{e:F^0} are Riesz bases, but the question seems to have remained open ever since. Benassi and Jaffard \cite{benassi:jaffard:1994} display some results related to the vaguelet property but the question is not raised or addressed explicitly. In a related direction, Meyer et al.\ \cite{MST:1999} considered an important process with stationary increments called fractional Brownian motion. These authors established an analogous wavelet decomposition with uncorrelated wavelet coefficients by taking
\begin{equation}\label{e:h1_h2_MST}
h_1(x) = \Big(\frac{ix}{1 - e^{-ix}}\Big)^{d}, \quad h_2(x) = (ix)^{d},
\end{equation}
where $(ix)^{d} = |x|^{d} e^{i \pi d/2}$ if $x > 0$, and $= |x|^{d} e^{-i\pi d/2}$ if $x < 0$. For this choice of $h_1,h_2$ and the underlying Meyer wavelet basis, Meyer et al.\ \cite{MST:1999}, Theorem 1, showed that the resulting family of functions forms a Riesz basis as well as a vaguelet family. Some related work can be found in Unser and Blu \cite{unser:blu:2005cardinal1}, Blu and Unser \cite{blu:unser:2007:self2}, Tafti, Van De Ville and Unser \cite{tafti:deville:unser:2009}.

In the context of wavelet decomposition of stochastic processes described above, it is natural to ask whether the bases are vaguelets and Riesz. For example, a Riesz basis for $L^2(\bbR)$ can be used in a multiresolution analysis (see, for instance, Daubechies \cite{daubechies:1992}, p.\ 139, and Meyer et al.\ \cite{MST:1999}). The vaguelet property paves the way for proving local and global regularity and irregularity results for the associated stochastic processes. For the case of fractional Brownian motion and the special wavelet basis obtained from (\ref{e:h1_h2_MST}), see Jaffard, Lashermes and Abry \cite{jaffard:2007} and, in particular, their Theorem 5.1. With a view towards simulation, constructing vaguelet families is efficient and useful in the sense that it allows for simulating the process in different time regions with different accuracies.

The rest of the paper is organized as follows. In Section \ref{s:assumptions-result}, we introduce further notation and the assumptions. In Section \ref{s:vaguelet}, we consider the vaguelet property of the families $\{\Psi_{j,k}, j \geq 0, k \in \bbZ \}$ and $\{\Psi^{j,k}, j \geq 0, k \in \bbZ \}$. Section \ref{s:riesz} contains the discussions and results on the Riesz property of the families \eqref{e:F_0} and \eqref{e:F^0}.

\section{Assumptions and statements of the main results}
\label{s:assumptions-result}

The transformations induced by \eqref{e:Psi_jk}, \eqref{e:Phi_jk} and \eqref{e:biorth-func-transform-normalized} are characterized by the following assumptions.

\medskip
{\sc Assumption $(h1)$:}
\begin{equation}\label{e:assumption_h1_h1inv_bounded_compactinterv}
\textnormal{$h_1(x) = \overline{h_1(-x)}$ and $h_1, h_1^{-1}$ are bounded on
every compact interval of $\bbR$};
\end{equation}

\medskip
{\sc Assumption $(h2)$:}
\smallskip

$(a)$
\begin{equation}\label{e:h2,h2inv_in_C2}
h_2(x) = \overline{h_2(-x)} \textnormal{ and }h_2, h^{-1}_2 \in C^2(\bbR);
\end{equation}

$(b)$ for some $d \in \bbR$ and any $\epsilon > 0$, there are constants $C_2 \geq C_1 > 0$ and $C_3 > 0$ (depending on $\epsilon$) such that
\begin{equation}\label{e:bounds-h_2}
C_1 |x|^{-d} \leq |h_2(x)| \leq C_2 |x|^{-d}, \quad \textnormal{for}
\hspace{0.1cm} |x|> \epsilon,
\end{equation}
\begin{equation}\label{e:assumption_decay_h2_h2inv}
\Big|\frac{d^{k}}{dx^{k}}(h_2)^p(x) \Big| \leq C_3 |x|^{-pd-k}, \quad p=-1,1, \quad k=1,2, \quad |x| > \epsilon,
\end{equation}
\begin{equation}\label{e:assumption_local_scaling}
\max_{p=-1,1}\max_{k=0,1,2} \frac{1}{2^{-pjd}} 2^{jk}\Big|\frac{d^{k}}{dx^{k}}(h_2)^{p}(2^j x) \Big| \leq \frac{C_3}{|x|^{N-2}}, \quad j \geq 1, \quad |x| \leq \epsilon,
\end{equation}
for some $N \geq 2$ (this will be the same $N \in \bbN$ as in condition \eqref{e:general_assumption_decay_MRA} below);
%

\medskip
{\sc Assumption $(h3)$:}
\begin{equation}\label{e:assumption_h2/h1_is_periodic}
\textnormal{The function $h_2/h_1$ is periodic with period
$2\pi$.}
\end{equation}
\begin{example}\label{ex:rational_functions}
All rational functions $h_2(x) = \frac{p(x)}{q(x)}$, where $p$ and $q$ are polynomials with complex coefficients and $q$ does not vanish a.e., satisfy ($\ref{e:bounds-h_2}$). The particular instance
\begin{equation}\label{e:h2_OU_0}
h_2(x) = \frac{1}{1+ix},
\end{equation}
which fits under ($\ref{e:bounds-h_2}$) with $d = 1$, is associated in Didier and Pipiras \cite{didier:pipiras:2008} with the Ornstein-Uhlenbeck process. This is so because the inverse Fourier transform of $|h_2(x)|^2$ gives the autocovariance function of the latter. Equivalently, instead of \eqref{e:h2_OU_0} one can take the expression
\begin{equation}\label{e:h2_OU}
h_2(x) = \frac{1}{\sqrt{1+x^2}}
\end{equation}
(see Example \ref{ex:h2_OU_satisfies_the_assumptions} below).

\end{example}
\begin{remark}\label{r:exp(-x^2)}
The condition (\ref{e:bounds-h_2}) does not encompass the seemingly simple, or simpler, case where $h_2(x) = e^{-x^2}$. This is so for a good reason, as shown in Section \ref{s:necessity_frac_decay}.
\end{remark}


\begin{remark}
All through the paper, we will use either $C$ or indexed $C$ to denote a constant that may change from line to line, but which does not depend on the wavelet indices $j$ or $k$.
\end{remark}

\begin{example}\label{ex:h2_OU_satisfies_the_assumptions}
The function $h_2$ in \eqref{e:h2_OU} satisfies the condition \eqref{e:assumption_local_scaling} with $N \geq 3$. In fact, for $h_2$ with $d = 1$,
$$
\frac{1}{2^{-j}}|h_2(2^jx)| = \frac{1}{2^{-j}}\frac{1}{\sqrt{1 + 2^{2j} x^2}} = \frac{1}{\sqrt{2^{-2j} + x^2}}
\leq \frac{1}{|x|} \leq \frac{C}{|x|^{N-2}}, \quad |x| \leq \epsilon.
$$
As for the derivatives when $|x|\leq \epsilon$, denote by $\frac{d}{dx}h_2(2^j x)$ the function $\frac{d}{dx}h_2$ evaluated at $2^j x$ (and not the derivative of $h_2(2^jx)$ with respect to $x$). We obtain
$$
\frac{2^j}{2^{-j}}\Big| \frac{d}{dx}h_2(2^{j}x) \Big| = \frac{2^j}{2^{-j}}  (1 + 2^{2j}x^2)^{-3/2}|2^j x| = \Big( \frac{1}{2^{2j}} + x^2\Big)^{-3/2} |x| \leq \frac{1}{|x|^2} \leq \frac{C}{|x|^{N-1}},
$$
$$
\frac{2^{2j}}{2^{-j}}\Big| \frac{d^2}{dx^2} h_2(2^{j}x)\Big| \leq \frac{2^{2j}}{2^{-j}}  |3(1 + 2^{2j}x^2)^{-5/2}2^{2j}x^2 - (1 + 2^{2j}x^2)^{-3/2}| \leq \frac{3x^2}{|x|^5} + \frac{1}{|x|^3} \leq \frac{C}{|x|^N}.
$$
Similarly, for $h^{-1}_2$,
$$
\frac{1}{2^j} |h^{-1}_2(2^jx)|= \frac{1}{2^j}(1 + 2^{2j}x^2)^{1/2} = \Big(\frac{1}{2^{2j}} + x^2 \Big)^{1/2} \leq (1 + x^2 )^{1/2} \leq \frac{1}{|x|^{N-2}},
$$
$$
\frac{2^{j}}{2^j}\Big| \frac{d}{dx}h^{-1}_2(2^{j}x) \Big| = (1 + 2^{2j}x^2)^{-1/2}|2^j x| = \Big(\frac{1}{2^{2j}} + x^2\Big)^{-1/2}|x| \leq \frac{1}{|x|}|x| \leq \frac{C}{|x|^{N-1}},
$$
and
$$
\frac{2^{2j}}{2^{j}}\Big| \frac{d^2}{dx^2} h^{-1}_2(2^{j}x)\Big| \leq \Big| 2^j [(-1)(1 + 2^{2j}x^2)^{-3/2}2^{2j}x^2 + (1 + 2^{2j}x^2)^{-1/2}] \Big|
$$
$$
\leq 2^j \Big[ 2^{-j}\Big( \frac{1}{2^{2j}} + x^2 \Big)^{-3/2}x^2 + 2^{-j} \Big( \frac{1}{2^{2j}} + x^2\Big)^{-1/2}\Big] \leq \frac{x^2}{|x|^3} + \frac{1}{|x|} \leq \frac{C}{|x|^N}.
$$
\end{example}

In this paper, we work with multiresolution analyses (MRAs). Each MRA is characterized by a scaling function $\phi$ and a wavelet $\psi$ (for more on MRAs, see, for instance, Mallat \cite{mallat:1998} or Daubechies \cite{daubechies:1992}). In particular, the family of associated rescaled or shifted functions $\{\phi(t-k), 2^{j/2}\psi(2^jt - k), j \geq 0, k \in \bbZ \}$ constitutes an orthonormal basis of $L^2(\bbR)$. The rescalings of the function $\phi$ are related through a filter $u \in l^2(\bbZ)$ by means of the Fourier domain relation
\begin{equation}\label{e:phi_across_scales}
\widehat{\phi}( x ) = \frac{1}{\sqrt{2}}\widehat{u}\Big(\frac{x}{2}\Big)\widehat{\phi}\Big(\frac{x}{2}\Big),
\end{equation}
where the discrete Fourier transform (DFT) $\widehat{u}(x) = \sum_{n \in \bbZ} u_{n} e^{- i n x}$ of $u$ satisfies
\begin{equation}\label{e:sq_u^(x)+sq_u^(x+2pi)=2}
|\widehat{u}(x)|^2+ |\widehat{u}(x+ \pi)|^2=2.
\end{equation}
In turn, the relation between the two functions $\phi$, $\psi$ is given by
\begin{eqnarray}
\widehat{\psi}(x) = \frac{1}{\sqrt{2}} \hspace{1mm}
\widehat{v}\left(\frac{x}{2}\right)
\widehat{\phi}\left(\frac{x}{2}\right), \quad \widehat{v}(x) := e^{-ix} \overline{\widehat{u}(x + \pi)}.
\end{eqnarray}
The filters $u$ and $v$ are called the conjugate mirror filters (CMF) of the MRA. In this paper, two reference MRAs will be the Meyer and the Daubechies: the former, for comparison with previous works; the latter, for being a touchstone example of an MRA with non-compact Fourier domain support, thus serving as the primary working example for the extension efforts contained in this paper.

All through the manuscript, we will assume that the scaling and wavelet functions satisfy the assumptions that we lay out next.

\medskip
{\sc Assumption $(W1)$:}
\begin{equation}\label{e:general_assumption_decay_MRA}
|\phi(t)| = O((1 + t^2)^{-N/2 -1}), \quad |\psi(t)| = O((1 + t^2)^{-N/2 -1}),
\end{equation}
where $N$ is the number of vanishing moments of the wavelet $\psi$, i.e.,
\begin{equation}\label{e:def_N_zero_moments}
\int_{\bbR} t^{\nu} \psi(t) dt = 0, \quad \nu = 0 , 1, \hdots, N-1
\end{equation}
(see also condition \eqref{e:assumption_local_scaling}). Suppose also that
\begin{equation}\label{e:assumption_number_vanishing_moments}
N \geq \max\Big\{2,- \frac{1}{2} + |d| \Big\},
\end{equation}
where $d$ is as in \eqref{e:bounds-h_2}.

\medskip
{\sc Assumption $(W2)$:} for $N$ as above,
\begin{equation}\label{e:psihat_uhat_in_CN(R)}
\widehat{\phi} , \widehat{\psi} \in C^{N}(\bbR), \quad \widehat{u} \in C^{N}[-\pi,\pi).
\end{equation}
Assumptions $(W1)$ and $(W2)$ will allow us to make use of Theorem 7.4, p.\ 241 in Mallat \cite{mallat:1998}, which relates $N$ to the behavior of $\widehat{\psi}(x)$ and $\widehat{u}(x)$ around $x = 0$ and $\pi$, respectively. In fact, fix $A > 0$, and denote $\widehat{\psi}^{(\nu)}(x) =  \frac{d^{\nu}}{d x^{\nu}} (\Re \widehat{\psi}(x) + i \Im \widehat{\psi}(x))$, $\nu \geq 0$. By \eqref{e:general_assumption_decay_MRA}, \eqref{e:assumption_number_vanishing_moments}, \eqref{e:psihat_uhat_in_CN(R)} and the Taylor expansion with Lagrange residual for the real and imaginary parts of $\widehat{\psi}$, there exist functions $\lambda_1$, $\lambda_2$ on $[-A,A]$ such that
$$
\widehat{\psi}(x) = \Big( \frac{d^N}{dx^{N}} \Re \widehat{\psi}(x)\Big|_{\lambda_1(x)} + i \frac{d^N}{dx^{N}} \Im \widehat{\psi}(x)\Big|_{\lambda_2(x)} \Big)\frac{x^{N}}{N!}
$$
since $\widehat{\psi}^{(\nu)}(0) = 0$, $\nu = 0,\hdots,N-1$. Therefore, and by a similar reasoning applied to $\widehat{u}$,
\begin{equation}\label{e:psi^_around_zero_u^_around_pi}
|\widehat{\psi}(x)| = O(|x|^{N}), \quad |\widehat{u}(\pi+x)| = O(|x|^{N}), \quad x \rightarrow 0.
\end{equation}

The next assumption is a technical condition that is satisfied by most MRAs of interest. It is used in Lemma \ref{l:Phi0_gives_a_Riesz_basis_for_V0} below. Before we state it, we recall a definition from Daubechies \cite{daubechies:1992}, p.\ 182.

\begin{definition}\label{def:congruent_set}
A compact set $K$ is called congruent to $[-\pi,\pi]$ modulo $2\pi$ if $|K| = 2\pi$, and for $x \in [-\pi,\pi]$ there exists $l \in \bbZ$ such that $x + 2 l \pi \in K$.
\end{definition}

\medskip
{\sc Assumption $(W3)$:} For some compact set $K$ which is congruent to $[-\pi,\pi]$ modulo $2\pi$,
\begin{equation}\label{e:assumption_nontrivial_compact_set_in_support_phi}
\inf_{x \in K} |\widehat{\phi}(x)| > 0.
\end{equation}

\medskip

\begin{remark}
It can be seen that a Meyer scaling function $\widehat{\phi}$ can be made to satisfy \eqref{e:assumption_nontrivial_compact_set_in_support_phi}, since $\widehat{\phi}(x) = 1$ for $|x| \leq 2\pi/3$, $\widehat{\phi}(x) = 0$ for $|x| > 4\pi/3$ and $|\widehat{\phi}(x)|$ is decreasing on $[0,\infty)$. More generally, Theorem 6.3.1 in Daubechies \cite{daubechies:1992} shows that, under mild assumptions, a scaling function $\phi$ satisfies \eqref{e:assumption_nontrivial_compact_set_in_support_phi}  (see also p.\ 218, step 1 of the proof). In particular, this is true for a Daubechies scaling function.
\end{remark}

Furthermore, we will make use of the following relation, which connects the decay of $\widehat{\phi}$, $\widehat{\psi}$ to that of $h_2$ as expressed in \eqref{e:bounds-h_2}.

\medskip
{\sc Assumption $(W4)$:} there exist $\zeta, \eta > 0$ such that
\begin{equation}\label{e:assumption_decay_wavelet_Fourier}
|\widehat{\phi}(x)| \leq \frac{C}{(1 + |x|)^{|d|+ 1/2 + \zeta}}, \quad \max_{k=0,1,2}\Big|\frac{d^{k}}{dx^{k}}\widehat{\psi}(x)\Big| \leq \frac{C}{(1 + |x|)^{|d|+ (2 + \eta)}}, \quad x \in \bbR.
\end{equation}
\begin{remark}
Depending on the choice of the function $\widehat{u}$, a Meyer wavelet can be made arbitrarily smooth to satisfy \eqref{e:assumption_number_vanishing_moments}, \eqref{e:psihat_uhat_in_CN(R)} (see Mallat \cite{mallat:1998}, p.\ 247). Furthermore, it immediately satisfies \eqref{e:assumption_decay_wavelet_Fourier} due to its compact Fourier domain support.

Because of its compact time domain support, a Daubechies wavelet satisfies $\int_{\bbR} |\psi(t)| (1 + |t|^{\alpha})dt < \infty$ for any $\alpha \geq 1$. Consequently, $\widehat{\psi} \in C^{\infty}(\bbR)$ (see Daubechies \cite{daubechies:1992}, p.\ 216). Moreover, if the number of vanishing moments is large enough, then $\psi(t)$ is smooth and integration by parts can be used to establish \eqref{e:assumption_decay_wavelet_Fourier}. For instance, a Daubechies wavelet is in $C^2(\bbR)$ if its number of vanishing moments is at least 7 (see Daubechies \cite{daubechies:1992}, p.\ 232; a method that provides sharper estimates of the regularity as a function of the vanishing moments is provided in Daubechies and Lagarias \cite{daubechies:lagarias:1992:two}).
\end{remark}

The following is the main result of this work concerning the vaguelet property. See Section \ref{s:proof-vaguelet} for a proof.

\begin{theorem}\label{t:main-vaguelet}
Under assumption $(h2)$ and those for $\psi$ in $(W1)$, $(W2)$ and $(W4)$, both families of functions $\{\Psi_{j,k}, j \geq 0, k \in \bbZ \}$ and $\{\Psi^{j,k}, j \geq 0, k \in \bbZ \}$ are vaguelets.
\end{theorem}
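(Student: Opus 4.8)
The plan is to exhibit each $\Psi_{j,k}$ as a normalized dilation--translation of a single, scale-dependent ``mother'' function, thereby reducing the three vaguelet conditions to estimates that are uniform in the scale $j$. Writing $\widehat{\psi}_{j,k}(x) = 2^{-j/2}e^{-ixk/2^j}\widehat{\psi}(x/2^j)$ and substituting into \eqref{e:psihat_uppersharp}, I would define $G_j$ through $\widehat{G}_j(y) = 2^{jd}h_2(2^jy)\widehat{\psi}(y)$. A direct computation with the dilation and modulation rules of the Fourier transform then gives $\Psi^{\#}_{j,k}(t) = 2^{j(1/2-d)}G_j(2^jt-k)$ and $\|\Psi^{\#}_{j,k}\| = 2^{-jd}\|G_j\|$, so that $\Psi_{j,k}(t) = 2^{j/2}\widetilde{G}_j(2^jt-k)$ with $\widetilde{G}_j = G_j/\|G_j\|$. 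Under this representation, conditions \eqref{e:Meyer_Coif_expression_5.1}, \eqref{e:Meyer_Coif_expression_5.2} and \eqref{e:Meyer_Coif_expression_5.3} become, respectively, a uniform pointwise bound $|\widetilde{G}_j(s)| \le C(1+|s|)^{-1-\alpha_1}$, the vanishing of $\int \widetilde{G}_j$, and a uniform H\"older-$\alpha_2$ bound on $\widetilde{G}_j$, all with constants independent of $j$.

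The task is therefore to control $\widehat{G}_j$ on the Fourier side uniformly in $j$. I would split the frequency axis into the high-frequency region $|2^jy| > \epsilon$, where the quasi-homogeneity \eqref{e:bounds-h_2} and the derivative decay \eqref{e:assumption_decay_h2_h2inv} give $2^{jd}|\tfrac{d^k}{dy^k}h_2(2^jy)| \le C|y|^{-d-k}$ for $k=0,1,2$, and the low-frequency region $|2^jy| \le \epsilon$, where the local scaling bound \eqref{e:assumption_local_scaling} supplies $2^{jd}2^{jk}|h_2^{(k)}(2^jy)| \le C|y|^{-(N-2)}$. Combining these with the behavior of $\widehat{\psi}$ near the origin, namely $\widehat{\psi}^{(m)}(y) = O(|y|^{N-m})$ from \eqref{e:psi^_around_zero_u^_around_pi}, and with its decay at infinity from \eqref{e:assumption_decay_wavelet_Fourier}, one checks that $\widehat{G}_j$, $\widehat{G}_j'$, $\widehat{G}_j''$ and $y\widehat{G}_j(y)$ all lie in $L^1(\bbR)$ with norms bounded uniformly in $j$. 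Two integrations by parts then yield $(1+s^2)|G_j(s)| \le \|\widehat{G}_j\|_{L^1} + \|\widehat{G}_j''\|_{L^1} \le C$, which gives the time localization with $\alpha_1$ arbitrarily close to $1$; the bound $y\widehat{G}_j \in L^1$ gives a uniform bound on $G_j'$, hence a uniform Lipschitz (a fortiori H\"older-$\alpha_2$) estimate; and the zero mean is immediate since $\widehat{G}_j(0) = 2^{jd}h_2(0)\widehat{\psi}(0) = 0$, using that $h_2(0)$ is finite by $h_2 \in C^2(\bbR)$ and $\widehat{\psi}(0) = 0$ by the vanishing moments.

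What remains, and what I expect to be the main obstacle, is the stability of the normalization: that $\|G_j\|$ stays bounded away from both $0$ and $\infty$ uniformly in $j$, without which the uniform bounds on $G_j$ would not transfer to $\widetilde{G}_j$. By Plancherel, $\|G_j\|^2 = \tfrac{1}{2\pi}\int 2^{2jd}|h_2(2^jy)|^2|\widehat{\psi}(y)|^2\,dy$. The upper bound follows from the same regime splitting as above. For the lower bound I would use the lower estimate in \eqref{e:bounds-h_2} to obtain $\|G_j\|^2 \ge c\int_{|y|>\epsilon 2^{-j}}|y|^{-2d}|\widehat{\psi}(y)|^2\,dy$; this is a nondecreasing, strictly positive function of $j$, hence bounded below by its value at $j=0$, where finiteness and positivity of the limiting integral $\int |y|^{-2d}|\widehat{\psi}(y)|^2\,dy$ rely on integrability near the origin, guaranteed by $N > d - 1/2$ from \eqref{e:assumption_number_vanishing_moments}, and on the decay \eqref{e:assumption_decay_wavelet_Fourier} at infinity. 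The delicate bookkeeping is matching the four regimes (low/high frequency, near/far from the origin) so that every constant is genuinely $j$-independent.

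Finally, the dual family $\{\Psi^{j,k}\}$ requires no new argument. Since the assumptions \eqref{e:assumption_decay_h2_h2inv} and \eqref{e:assumption_local_scaling} are stated symmetrically for $p = 1$ and $p = -1$, while \eqref{e:assumption_number_vanishing_moments} and \eqref{e:assumption_decay_wavelet_Fourier} depend only on $|d|$, the function $\overline{h_2^{-1}}$ satisfies assumption $(h2)$ with $d$ replaced by $-d$ (note $|\overline{h_2^{-1}(x)}| = |h_2(x)|^{-1}$, and complex conjugation leaves the moduli of all derivatives unchanged). Repeating the preceding argument with $h_2$ replaced by $\overline{h_2^{-1}}$ and $d$ by $-d$ shows that $\{\Psi^{j,k}, j \ge 0, k \in \bbZ\}$ is a vaguelet family as well.
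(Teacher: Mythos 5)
Your proposal is correct and follows essentially the same route as the paper: Fourier-domain estimates split at the threshold $\epsilon$ so that \eqref{e:bounds-h_2}--\eqref{e:assumption_decay_h2_h2inv} handle high frequencies and \eqref{e:assumption_local_scaling} handles low ones, two integrations by parts for the spatial decay, the norm asymptotics $\|\Psi^{\#}_{j,k}\|\asymp 2^{-jd}$ (your $\|G_j\|\asymp 1$ is exactly Proposition \ref{p:decay_norm}), and the substitution $h_2\mapsto \overline{h_2^{-1}}$, $d\mapsto -d$ for the dual family. Packaging the argument through the rescaled functions $G_j$, and deriving the H\"older bound from $y\widehat{G}_j\in L^1$ rather than from Mallat's splitting of the frequency axis at $|t-t'|^{-1}$, are only organizational variants of the paper's proof.
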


As mentioned in the introduction, the theorem above can be complemented by showing that the families of functions (\ref{e:F_0}) and (\ref{e:F^0}) are also Riesz bases. For notational simplicity we will write
\begin{equation}\label{e:Phi_j_Phi^j_eta_j_eta^j}
\Phi_{j}= \frac{\Phi^{\#}_{j,0}}{\|\Phi^{\#}_{j,0} \|}, \quad \Phi^{j}= \frac{\Phi^{j,0}_{\#}}{\|\Phi^{j,0}_{\#} \|}, \quad \eta_j = \frac{\Psi^{\#}_{j,0}}{\| \Psi^{\#}_{j,0} \|}, \quad \eta^j = \frac{ \Psi^{j,0}_{\#} }{\|\Psi^{j,0}_{\#}\|}.
\end{equation}
The Riesz property of the collection \eqref{e:F_0} is equivalent to that of the collection
\begin{equation}\label{e:new-func-transform-normalized}
\{\Phi_{0}(t-k), \eta_{j}(t - 2^{-j}k),\ j \geq 0,k \in \bbZ \}.
\end{equation}
For future use, for $j \geq 0$, let
\begin{equation}\label{e:Vj_Wj}
V_j = \overline{\textnormal{span}} \{\Phi_j(t-2^{-j}k), k \in \bbZ\}, \quad W_j = \overline{\textnormal{span}} \{\eta_j(t-2^{-j}k), k \in \bbZ\}.
\end{equation}
Also, let
\begin{equation}\label{e:biorth-new-func-transform-normalized}
\{\Phi^{0}(t-k), \eta^{j}(t - 2^{-j}k),\ j \geq 0,k \in \bbZ \}
\end{equation}
be the collection of functions which are biorthogonal to
(\ref{e:new-func-transform-normalized}).

The following is the main result of this work concerning the Riesz property. See Section \ref{s:proof-riesz} for a proof.

\begin{theorem}\label{t:main}
Under assumptions $(h1)$--$(h3)$ and $(W1)$--$(W4)$, both families of functions \eqref{e:new-func-transform-normalized}, \eqref{e:biorth-new-func-transform-normalized} are Riesz bases of $L^2(\bbR)$.
\end{theorem}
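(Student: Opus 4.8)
The plan is to verify the two defining requirements \eqref{e:Riesz_basis_equation_on_bounds} of a Riesz basis by coupling the vaguelet bound \eqref{e:bounds_vaguelets} with the biorthogonality relations \eqref{e:biortho_relation}--\eqref{e:orthog_of_Phi_and_Psi}. The organizing principle is an abstract reduction. Write $\{e_l\}$ for the family \eqref{e:new-func-transform-normalized} and $\{\tilde e_l\}$ for \eqref{e:biorth-new-func-transform-normalized}, and let $T,\tilde T:\ell^2(\bbZ)\to L^2(\bbR)$ be the synthesis operators $Ta=\sum_l a_l e_l$ and $\tilde T a=\sum_l a_l\tilde e_l$; the upper bound in \eqref{e:Riesz_basis_equation_on_bounds} is exactly the statement that $T$ (resp.\ $\tilde T$) is bounded. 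Biorthogonality of the two families (relations \eqref{e:biortho_relation}--\eqref{e:orthog_of_Phi_and_Psi} together with their symmetric counterpart) says precisely that $\tilde T^{\ast}T=I$ and $T^{\ast}\tilde T=I$ on $\ell^2(\bbZ)$. Hence, once both operators are bounded, the lower bound is automatic: $\norm{a}=\norm{\tilde T^{\ast}Ta}\le\norm{\tilde T}\,\norm{Ta}$ yields the left inequality in \eqref{e:Riesz_basis_equation_on_bounds} for $\{e_l\}$, and symmetrically for $\{\tilde e_l\}$. Thus the norm equivalence in \eqref{e:Riesz_basis_equation_on_bounds} follows from the two upper bounds alone, and for the density requirement it suffices to prove that \emph{one} family is complete: if $T$ is bounded below and has dense range, it is invertible, whence $\tilde T=(T^{\ast})^{-1}$ is invertible as well, and both families are Riesz bases.

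The upper bounds are assembled from the two blocks of each family. For the wavelet block, Theorem \ref{t:main-vaguelet} shows that $\{\Psi_{j,k}\}$ and $\{\Psi^{j,k}\}$ are vaguelets, so \eqref{e:bounds_vaguelets} gives $\norm{\sum_{j,k}d_{j,k}\Psi_{j,k}}^2\le C\sum_{j,k}d_{j,k}^2$ and likewise for the dual family. For the scaling block at $j=0$, Lemma \ref{l:Phi0_gives_a_Riesz_basis_for_V0} gives $\norm{\sum_k a_k\Phi_{0,k}}^2\le C\sum_k a_k^2$, and an analogous argument (using $(h1)$ for $h_1^{-1}$) handles $\{\Phi^{0,k}\}$. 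A single upper bound for each full family then follows by the triangle inequality together with $(x+y)^2\le 2x^2+2y^2$. With the reduction of the previous paragraph, this already establishes the norm equivalence \eqref{e:Riesz_basis_equation_on_bounds} for both families.

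It remains to prove completeness of \eqref{e:new-func-transform-normalized}, which I expect to be the main obstacle. I would argue in the Fourier domain: if $g\in L^2(\bbR)$ is orthogonal to every $\Phi_{0,k}$ and every $\Psi_{j,k}$, then by Parseval $\langle\widehat g,h_1\widehat\phi_{0,k}\rangle=0$ and $\langle\widehat g,h_2\widehat\psi_{j,k}\rangle=0$ for all $j\ge 0$, $k\in\bbZ$. Setting $F:=\overline{h_2}\,\widehat g$ and using $(h3)$ to write $\overline{h_1}\,\widehat g=\overline{m}^{-1}F$ with $m:=h_2/h_1$ being $2\pi$-periodic, these read $F\perp\widehat\psi_{j,k}$ for all $j\ge0,k$ and $\overline{m}^{-1}F\perp\widehat\phi_{0,k}$ for all $k$. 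The first set places $\mathcal F^{-1}F$ in the scaling space $\mathcal V_0$ of the underlying orthonormal MRA (it is orthogonal to every $\mathcal W_j$, $j\ge0$), so $F=p\,\widehat\phi$ for a $2\pi$-periodic $p$; substituting into the second set and invoking the orthonormality identity $\sum_n|\widehat\phi(x+2\pi n)|^2=1$ collapses the periodization to $\overline{m}(x)^{-1}p(x)=0$ a.e. Since $m,m^{-1}\in L^\infty(\bbR)$ (being $2\pi$-periodic and locally bounded under $(h1)$--$(h3)$, hence nonzero a.e.), this forces $p=0$, so $F=0$ and $g=0$ because $h_2$ does not vanish. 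The delicate point --- the crux of the theorem --- is that this computation presumes $F=\overline{h_2}\,\widehat g\in L^2(\bbR)$. This holds automatically when $d\ge0$, where $h_2$ is bounded by $(h2)$; but when $d<0$ the multiplier $h_2$ grows like $|x|^{|d|}$, so $F$ is only a tempered distribution lying in the negative Sobolev space $H^{-|d|}$. I would close this gap by running the same orthogonality and periodization argument inside $H^{-|d|}$, where $\widehat\psi_{j,k}$ and $\widehat\phi_{0,k}$ act as legitimate test functions --- their smoothness and decay being guaranteed by $(W1)$, $(W2)$, $(W4)$ --- and where the orthonormal wavelet system is an unconditional basis. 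This is exactly the point of contact with the Riesz theory of wavelets in Sobolev spaces discussed in Section \ref{s:Sobolev}, which supplies the completeness in the general case. By the abstract reduction, completeness of this single family upgrades both \eqref{e:new-func-transform-normalized} and \eqref{e:biorth-new-func-transform-normalized} to Riesz bases of $L^2(\bbR)$.
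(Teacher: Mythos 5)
Your treatment of the norm inequalities (part $(i)$ of the Riesz definition) is essentially the paper's own argument in operator dress: the upper bound comes from the vaguelet inequality (Proposition \ref{p:ineq_vaguelet}, i.e.\ \eqref{e:bounds_vaguelets}) for the wavelet block together with Lemma \ref{l:Phi0_gives_a_Riesz_basis_for_V0} for the scaling block, and the lower bound is the Cauchy--Schwarz/duality trick $\|a\|^2=\langle Ta,\tilde Ta\rangle\le \|Ta\|\,\|\tilde T\|\,\|a\|$ using the biorthogonality relations \eqref{e:biortho_relation}--\eqref{e:orthog_of_Phi_and_Psi}. That part is fine and matches Section \ref{s:proof-riesz}.

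The genuine gap is in your completeness argument, and it sits exactly where you suspect. Your orthogonal-complement/periodization computation is legitimate only when $F=\overline{h_2}\,\widehat g\in L^2(\bbR)$, which holds for $d\ge 0$ but fails for $d<0$, where $h_2$ grows like $|x|^{|d|}$. Your proposed repair --- rerun the argument in $H^{-|d|}$ and invoke the Sobolev-space Riesz theory of Section \ref{s:Sobolev} --- is not something you can lean on here: the paper presents that section explicitly as an \emph{outline} of an alternative route, and itself flags two unresolved mismatches (the extension of Proposition \ref{p:decay_norm} to $j<0$, and the fact that the approximation terms in Han--Shen cannot be identified with the $\Phi^{\#}_{j,k}$). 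Moreover, in $H^{-|d|}$ the identification ``orthogonal to all $\mathcal W_j$, $j\ge 0$, hence lies in $\mathcal V_0$'' is no longer the elementary Hilbert-space fact you are using; it requires the unconditional-basis property of the wavelet system in that Sobolev space, which is precisely the nontrivial input you would need to establish. The paper closes this case differently and constructively (Proposition \ref{p:denseness}): it approximates by $f\in L^2_b(\bbR)$ (Fourier support bounded away from $0$ and $\infty$), for which $\widehat g:=\widehat f/h_2$ is in $L^2(\bbR)$ and indeed in $W^{d}(\bbR)$; it then expands $g$ in the orthonormal wavelet basis, multiplies through by $h_2$, and justifies $L^2$-convergence of the transformed series by combining the norm asymptotics $\|\Psi^{\#}_{j,k}\|\asymp 2^{-jd}$ (Proposition \ref{p:decay_norm}), the Sobolev characterization of wavelet coefficients (to get \eqref{e:djk_2^(-jd)_in_l2} when $d<0$), and the vaguelet bound. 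Density of $L^2_b(\bbR)$ in $L^2(\bbR)$ finishes. The paper also routes the structural bookkeeping through the two-scale relation $V_j\oplus W_j=V_{j+1}$ of Lemma \ref{l:Phij_and_etaj_to_Phij+1}, which your proposal does not need but which is harmless. In short: your part $(i)$ is the paper's proof; your part $(ii)$ is a different and incomplete argument whose hard case ($d<0$) is named but not proved.
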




\section{The vaguelet property}
\label{s:vaguelet}

This section concerns the vaguelet property of the families of functions $\{\Psi_{j,k}, j \geq 0, k \in \bbZ \}$ and $\{\Psi^{j,k}, j \geq 0, k \in \bbZ \}$. The vaguelet property is proved in Section \ref{s:proof-vaguelet} under the assumptions of Theorem \ref{t:main-vaguelet}. The necessity of the quasi-homogeneity assumption (\ref{e:quasihomogeneity}) is discussed in Section \ref{s:necessity_frac_decay}.

\subsection{Proof of Theorem \ref{t:main-vaguelet}}
\label{s:proof-vaguelet}

We first show that $\widehat{\Psi}^{\#}_{j,k}, \widehat{\Psi}^{j,k}_{\#} \in L^1(\bbR)$ (and, thus, that $\Psi^{\#}_{j,k}$, $\Psi^{j,k}_{\#}$ are well-defined pointwise), and that $\widehat{\Psi}^{\#}_{j,k}, \widehat{\Psi}^{j,k}_{\#}, \widehat{\Phi}^{\#}_{j,k}, \widehat{\Phi}^{j,k}_{\#} \in L^2(\bbR)$. The argument is based on the decay of the functions $\widehat{\psi}$ and $\widehat{\phi}$, respectively. The results on $\Phi^{\#}_{j,k}$, $\Phi^{j,k}_{\#}$ will be used in Section \ref{s:riesz}.
\begin{lemma}\label{l:bases_are_welldefined}
Under assumptions $(h1)$, $(h2)$, $(h3)$, $(W1)$, $(W2)$ and $(W4)$,
$$
\widehat{\Phi}^{\#}_{j,k}, \widehat{\Phi}^{j,k}_{\#} \in L^2(\bbR), \quad \widehat{\Psi}^{\#}_{j,k}, \widehat{\Psi}^{j,k}_{\#} \in L^1(\bbR) \cap L^2(\bbR).
$$
\end{lemma}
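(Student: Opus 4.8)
The plan is to reduce each membership to the finiteness of an integral in the Fourier variable and then estimate that integral by splitting $\bbR$ into the compact region $\{|x|\le 1\}$ and the far field $\{|x|>1\}$. Since
$$
\widehat{\psi}_{j,k}(x)=2^{-j/2}e^{-i2^{-j}kx}\,\widehat{\psi}(2^{-j}x),\qquad
\widehat{\phi}_{j,k}(x)=2^{-j/2}e^{-i2^{-j}kx}\,\widehat{\phi}(2^{-j}x),
$$
the translation index $k$ enters only through a unimodular factor and is irrelevant to integrability, so I would fix $j\ge 0$ and treat the four moduli
$$
|h_2(x)|^{\pm1}\,2^{-j/2}\bigl|\widehat{\psi}(2^{-j}x)\bigr|,\qquad
|h_1(x)|^{\pm1}\,2^{-j/2}\bigl|\widehat{\phi}(2^{-j}x)\bigr|,
$$
the sign $+1$ corresponding to $\widehat{\Psi}^{\#}_{j,k},\widehat{\Phi}^{\#}_{j,k}$ and $-1$ to the dual pair $\widehat{\Psi}^{j,k}_{\#},\widehat{\Phi}^{j,k}_{\#}$.

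On $\{|x|\le1\}$ there is nothing to do: $h_2,h_2^{-1}$ are continuous on $\bbR$ by $(h2)$, $h_1,h_1^{-1}$ are bounded on compact intervals by $(h1)$, and $\widehat{\psi},\widehat{\phi}$ are continuous (indeed $C^N$) by $(W2)$, so every integrand is bounded there and contributes a finite amount to both the $L^1$- and $L^2$-integrals. All the genuine work is in the far field, where I would use two ingredients. First, for $|x|>1$ the two-sided bound \eqref{e:bounds-h_2} gives at once
$$
|h_2(x)|^{\pm1}\le C\,|x|^{|d|},
$$
because both $|x|^{-d}$ and $|x|^{d}$ are dominated by $|x|^{|d|}$ once $|x|\ge1$. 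Second, \eqref{e:assumption_decay_wavelet_Fourier} yields, for fixed $j$ and $|x|>1$, the estimates $|\widehat{\psi}(2^{-j}x)|\le C_j|x|^{-|d|-(2+\eta)}$ and $|\widehat{\phi}(2^{-j}x)|\le C_j|x|^{-|d|-1/2-\zeta}$.

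For the $\Psi$-type functions, multiplying these bounds makes the far-field integrand $O(|x|^{-(2+\eta)})$, whose exponent is $<-1$, so $\widehat{\Psi}^{\#}_{j,k},\widehat{\Psi}^{j,k}_{\#}\in L^1(\bbR)$ (and hence $\Psi^{\#}_{j,k},\Psi^{j,k}_{\#}$ are well defined pointwise as inverse transforms of $L^1$ functions). Squaring replaces the exponent by $-2(2+\eta)<-1$, giving the $L^2$ membership as well. The feature to stress is that the modulus $|d|$ appearing in \eqref{e:assumption_decay_wavelet_Fourier} is exactly the device that lets one and the same decay estimate on $\widehat{\psi}$ absorb both $|h_2|$ and $|h_2|^{-1}$, irrespective of the sign of $d$: the factor $|x|^{|d|}$ coming from $h_2^{\pm1}$ and the factor $|x|^{-|d|}$ coming from $\widehat{\psi}$ cancel.

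The remaining point, and the only step needing an idea beyond this bookkeeping, is that $(h1)$ says nothing about $h_1$ at infinity. Here I would invoke $(h3)$: the function $h_2/h_1$ is $2\pi$-periodic, and on $[-\pi,\pi]$ both it and its reciprocal are bounded, since the numerators are continuous and the denominators are bounded away from $0$ (by $(h2)$ and $(h1)$ respectively). Periodicity then propagates these bounds to all of $\bbR$, so that $C^{-1}|h_2(x)|\le|h_1(x)|\le C|h_2(x)|$ for every $x$; in particular $|h_1(x)|^{\pm1}\le C|x|^{|d|}$ on $\{|x|>1\}$. Feeding this into the product with the $\widehat{\phi}$-estimate gives a far-field integrand $O(|x|^{-1/2-\zeta})$, which is not integrable but whose square $O(|x|^{-1-2\zeta})$ is, yielding $\widehat{\Phi}^{\#}_{j,k},\widehat{\Phi}^{j,k}_{\#}\in L^2(\bbR)$. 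This also explains why only $L^2$, and not $L^1$, is asserted for the $\Phi$-type functions: the $\widehat{\phi}$-decay in \eqref{e:assumption_decay_wavelet_Fourier} carries the weaker order $1/2+\zeta$ rather than $2+\eta$. I expect the transfer of the quasi-homogeneity of $h_2$ to $h_1$ through periodicity to be the crux; everything else is elementary estimation.
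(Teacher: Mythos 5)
Your proposal is correct and follows essentially the same route as the paper's proof: split into a compact region where $(h1)$, $(h2)$ give boundedness, and a far field where the quasi-homogeneity \eqref{e:bounds-h_2} is paired with the decay \eqref{e:assumption_decay_wavelet_Fourier}, with the periodicity and boundedness of $h_1/h_2$ (from $(h1)$--$(h3)$) used to transfer the far-field control from $h_2$ to $h_1$ for the $\Phi$-type functions. The only cosmetic difference is that you absorb both signs of $d$ into the uniform bound $|h_2(x)|^{\pm1}\le C|x|^{|d|}$, whereas the paper keeps the signed exponent $|x|^{-d}$; both yield integrable far-field exponents.
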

\begin{proof}
We first show that $\widehat{\Psi}^{\#}_{j,k} \in L^1(\bbR)$. Let $\epsilon > 0$. On the one hand, by \eqref{e:bounds-h_2} and \eqref{e:assumption_decay_wavelet_Fourier},
$$
\int_{|x| > \epsilon} |h_2(x)| \frac{1}{2^{j/2}}\Big| \widehat{\psi}\Big( \frac{x}{2^j}\Big)\Big| dx \leq C
\int_{|x| > \epsilon} \frac{1}{|x|^{d}} \frac{1}{2^{j/2}} \frac{1}{(1 + |2^{-j}x|)^{|d| + (2 + \eta)}}dx
$$
$$
\leq C' 2^{j(|d| + 2+ \eta -1/2)} \int_{|x| > \epsilon} \frac{1}{|x|^{d + |d| + (2 + \eta)}}dx < \infty.
$$

On the other hand, by \eqref{e:h2,h2inv_in_C2}, \eqref{e:assumption_number_vanishing_moments} and \eqref{e:psihat_uhat_in_CN(R)},
$$
\int_{|x| \leq \epsilon} |h_2(x)| \frac{1}{2^{j/2}}\Big| \widehat{\psi}\Big( \frac{x}{2^j}\Big)\Big| dx
\leq C \int_{|x| \leq \epsilon} \frac{1}{2^{j/2}}\Big| \widehat{\psi}\Big( \frac{x}{2^j}\Big)\Big| dx < \infty.
$$
Therefore, $\widehat{\Psi}^{\#}_{j,k} \in L^1(\bbR)$. By a similar argument, the same is true for $\widehat{\Psi}^{j,k}_{\#}$.

Also, a similar argument based on \eqref{e:h2,h2inv_in_C2}, \eqref{e:bounds-h_2}, \eqref{e:assumption_number_vanishing_moments}, \eqref{e:psihat_uhat_in_CN(R)} and \eqref{e:assumption_decay_wavelet_Fourier} establishes that $\widehat{\Psi}^{\#}_{j,k}, \widehat{\Psi}^{j,k}_{\#}  \in L^2(\bbR)$.

We now show that $\widehat{\Phi}^{\#}_{j,k} \in L^2(\bbR)$. For $A > 0$, by condition \eqref{e:assumption_h1_h1inv_bounded_compactinterv},
$$
\int_{|x| \leq A}|h_1(x)|^2 \frac{1}{2^j}\Big|\widehat{\phi}\Big(\frac{x}{2^j} \Big)\Big|^2 dx \leq C \int_{|x|\leq A}\frac{1}{2^j}\Big|\widehat{\phi}\Big(\frac{x}{2^j} \Big)\Big|^2 dx,
$$
which is finite by \eqref{e:assumption_number_vanishing_moments} and \eqref{e:psihat_uhat_in_CN(R)}. On the other hand, by \eqref{e:assumption_h1_h1inv_bounded_compactinterv}, \eqref{e:h2,h2inv_in_C2} and \eqref{e:assumption_h2/h1_is_periodic}, $h_1/h_2$ is periodic and bounded. Therefore, by \eqref{e:bounds-h_2} and \eqref{e:assumption_decay_wavelet_Fourier},
$$
\int_{|x|>A}|h_1(x)|^2 \frac{1}{2^j}\Big|\widehat{\phi}\Big(\frac{x}{2^j} \Big)\Big|^2  dx = \int_{|x|> A}\Big|\frac{h_1(x)}{h_2(x)}\Big|^2 |h_2(x)|^2 \frac{1}{2^j}\Big|\widehat{\phi}\Big(\frac{x}{2^j} \Big)\Big|^2  dx
$$
$$
\leq
C \int_{|x|>A} \frac{1}{|x|^{2d}} \frac{1}{2^j} \frac{1}{|2^{-j}x|^{2(|d|+1/2 + \zeta)}} \hspace{1mm}dx < \infty.
$$
%
%
An analogous reasoning shows that $\widehat{\Phi}^{j,k}_{\#} \in L^2(\bbR)$. $\Box$\\
\end{proof}

In the next proposition, we establish the asymptotic behavior of the norm of the functions
$\Psi^{\#}_{j,k}$ and $\Psi^{j,k}_{\#}$. The result will enter into the proofs of the vaguelet properties \eqref{e:Meyer_Coif_expression_5.1} and \eqref{e:Meyer_Coif_expression_5.3}.
\begin{proposition}\label{p:decay_norm}
Under assumptions $(h2)$, $(W1)$, $(W2)$, $(W4)$, there exist constants $0 < C_1 < C_2$, $0 < C^{'}_1 < C^{'}_2$ such that, for $j \geq 0$,
\begin{equation}\label{e:decay_Psi_sharp_jk_and_Psi_jk_sharp}
C_1 \leq \frac{\|\Psi^{\#}_{j,k}\|}{2^{-jd}} \leq C_2, \quad C^{'}_1 \leq \frac{\|\Psi^{j,k}_{\#}\|}{2^{jd}} \leq C^{'}_2.
\end{equation}
\end{proposition}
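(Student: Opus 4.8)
The plan is to reduce both estimates, via Plancherel's theorem, to the uniform boundedness (above and below, by positive constants) of a single scale-invariant integral. First I would write $\widehat{\psi}_{j,k}(x) = 2^{-j/2} e^{-i 2^{-j}kx}\widehat{\psi}(2^{-j}x)$, so that $|\widehat{\Psi}^{\#}_{j,k}(x)|^2 = 2^{-j}|h_2(x)|^2|\widehat{\psi}(2^{-j}x)|^2$ does not depend on $k$. With the paper's convention $\|f\|^2 = (2\pi)^{-1}\|\widehat{f}\|^2$ and the substitution $y = 2^{-j}x$, this gives $\|\Psi^{\#}_{j,k}\|^2 = (2\pi)^{-1}\int_{\bbR}|h_2(2^j y)|^2|\widehat{\psi}(y)|^2\,dy$, whence $\|\Psi^{\#}_{j,k}\|^2/2^{-2jd} = (2\pi)^{-1}I_j$ with $I_j := \int_{\bbR} 2^{2jd}|h_2(2^j y)|^2|\widehat{\psi}(y)|^2\,dy$. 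By \eqref{e:biorth-func-transform} the same computation applied to $\overline{h_2^{-1}}$ gives $\|\Psi^{j,k}_{\#}\|^2/2^{2jd} = (2\pi)^{-1}\tilde{I}_j$ with $\tilde{I}_j := \int_{\bbR} 2^{-2jd}|h_2(2^j y)|^{-2}|\widehat{\psi}(y)|^2\,dy$. Thus \eqref{e:decay_Psi_sharp_jk_and_Psi_jk_sharp} follows once $I_j$ and $\tilde{I}_j$ are bounded between positive constants uniformly in $j \geq 0$.

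For the upper bounds I would fix $\epsilon > 0$ small enough that $|\widehat{\psi}(y)| \leq C|y|^N$ holds on $|y| \leq \epsilon$ (possible by \eqref{e:psi^_around_zero_u^_around_pi}) and split each integral at $|y| = \epsilon$. On the outer region $|y| > \epsilon$ one has $|2^j y| > \epsilon$, so \eqref{e:bounds-h_2} yields $2^{2jd}|h_2(2^j y)|^2 \leq C_2^2|y|^{-2d}$ and $2^{-2jd}|h_2(2^j y)|^{-2} \leq C_1^{-2}|y|^{2d}$; the remaining integrals $\int_{|y|>\epsilon}|y|^{\mp 2d}|\widehat{\psi}(y)|^2\,dy$ are finite constants, since the Fourier decay \eqref{e:assumption_decay_wavelet_Fourier} makes both integrands $O(|y|^{-4-2\eta})$ at infinity (the extra exponent $\mp 2d - 2|d| \leq 0$ is harmless). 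On the inner region $|y| \leq \epsilon$ I would deliberately avoid quasi-homogeneity and instead invoke the local-scaling bound \eqref{e:assumption_local_scaling} (with $p = 1$ and $p = -1$, $k = 0$), valid there for $j \geq 1$, which gives $2^{2jd}|h_2(2^j y)|^2 \leq C_3^2|y|^{-2(N-2)}$ and likewise for the reciprocal; multiplied by $|\widehat{\psi}(y)|^2 \leq C|y|^{2N}$ the integrand is bounded by $C|y|^{4}$, uniformly in $j$ and integrable near the origin. The single remaining case $j = 0$ is covered directly by Lemma \ref{l:bases_are_welldefined}.

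For the lower bounds I would discard the inner region and keep only the lower quasi-homogeneity bound on $|y| > \epsilon$, obtaining $I_j \geq C_1^2\int_{|y|>\epsilon}|y|^{-2d}|\widehat{\psi}(y)|^2\,dy$ and $\tilde{I}_j \geq C_2^{-2}\int_{|y|>\epsilon}|y|^{2d}|\widehat{\psi}(y)|^2\,dy$. Both right-hand sides are positive constants independent of $j$: positivity holds provided $\epsilon$ is chosen small enough that $\widehat{\psi}$ does not vanish almost everywhere on $\{|y| > \epsilon\}$, which is possible because $\widehat{\psi}$ is continuous, not identically zero, and satisfies $\widehat{\psi}(0) = 0$. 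Taking square roots and absorbing the factor $2\pi$ into the constants then yields \eqref{e:decay_Psi_sharp_jk_and_Psi_jk_sharp}.

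I expect the inner region to be the only real obstacle. The substitution $y = 2^{-j}x$ turns the fixed behaviour of $h_2$ near the origin into a cutoff at $\epsilon 2^{-j}$ that shrinks with $j$, and a crude use of \eqref{e:bounds-h_2} there would leave an integrand such as $|y|^{-2d}|\widehat{\psi}(y)|^2$ that can be only borderline integrable, producing a logarithmic, $j$-dependent divergence in the worst case permitted by \eqref{e:assumption_number_vanishing_moments}. The role of the local-scaling hypothesis \eqref{e:assumption_local_scaling}, together with the $N$ vanishing moments, is exactly to replace this borderline behaviour by the robust estimate $C|y|^4$ that is uniform in $j$; recognizing that this is the purpose of \eqref{e:assumption_local_scaling} is the heart of the argument, whereas the outer region and the lower bounds are routine.
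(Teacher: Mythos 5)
Your proof is correct, and its skeleton --- Plancherel, the substitution $y=2^{-j}x$, a low-/high-frequency split, quasi-homogeneity \eqref{e:bounds-h_2} on the outer region, and discarding the inner region for the lower bound --- matches the paper's. The genuine difference is where you cut and what you use on the inner piece. The paper splits at $|x|=\epsilon$ in the \emph{original} frequency variable, so its inner region needs only that $h_2$ and $h_2^{-1}$ are bounded on compacts (from \eqref{e:h2,h2inv_in_C2}) together with $|\widehat{\psi}(2^{-j}x)|=O(|2^{-j}x|^{N})$; this produces a contribution of order $2^{-j(1+2N)}$, which is then absorbed into $2^{-2jd}$ because $1+2N\geq 2|d|$ by \eqref{e:assumption_number_vanishing_moments}. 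You instead cut at $|y|=\epsilon$ in the rescaled variable and invoke the local-scaling hypothesis \eqref{e:assumption_local_scaling} with $k=0$, obtaining the $j$-uniform integrand $C|y|^{4}$. The paper's route is more economical in that it never touches \eqref{e:assumption_local_scaling} in this proposition, but it pays by bounding its outer integral over $|y|>2^{-j}\epsilon$ by $\int_{\bbR}|y|^{-2d}|\widehat{\psi}(y)|^{2}\,dy$, whose convergence at the origin requires $2N-2d>-1$ while \eqref{e:assumption_number_vanishing_moments} only guarantees $2N-2d\geq-1$; your version sidesteps exactly this borderline case, as you correctly anticipated in your closing remark. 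Your separate handling of $j=0$ via Lemma \ref{l:bases_are_welldefined} (where \eqref{e:assumption_local_scaling} is unavailable) and your choice of $\epsilon$ so that $\widehat{\psi}$ does not vanish a.e.\ on $\{|y|>\epsilon\}$ are both genuinely needed and correctly supplied.
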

\begin{proof}
Consider $\Psi^{\#}_{j,k}$ first. Up to a constant, we can break up the square norm $\|\Psi^{\#}_{j,k}\|^2$ as
$$
\int_{\bbR} \Big| h_2(x) 2^{j/2} e^{-i 2^{-j}k x}\frac{1}{2^j}\widehat{\psi}\Big(\frac{x}{2^j}\Big)\Big|^2 dx = 2^{-j}\int_{\bbR} \Big| h_2(x)\widehat{\psi}\Big(\frac{x}{2^j}\Big)\Big|^2 dx
$$
\begin{equation}\label{e:square_norm_Psi_nonormalized}
= 2^{-j}\Big( \int_{|x|\leq \epsilon} +  \int_{|x| > \epsilon} \Big) \Big| h_2(x)\widehat{\psi}\Big(\frac{x}{2^j}\Big)\Big|^2 dx.
\end{equation}
Under \eqref{e:general_assumption_decay_MRA}, \eqref{e:assumption_number_vanishing_moments}, \eqref{e:psihat_uhat_in_CN(R)} (see also \eqref{e:psi^_around_zero_u^_around_pi}), the first term in the sum \eqref{e:square_norm_Psi_nonormalized} is
$$
2^{-j} \int_{|x|\leq \epsilon} | h_2(x)|^2 |O[(2^{-j}x)^N] |^2 dx
\leq C 2^{-j}\int_{|x|\leq \epsilon} |h_2(x)|^2 |(2^{-j}x)^N|^2 dx
$$
$$
\leq C 2^{-j(1+2N)}\int_{|x|\leq \epsilon} |h_2(x)|^2 x^{2N}dx \leq 2^{-j(1+2N)}  C(N),
$$
where the last inequality follows from the fact that $|h_2(x)|$ is bounded over $[-\epsilon,\epsilon]$ by \eqref{e:h2,h2inv_in_C2}. By \eqref{e:bounds-h_2}, the second term in \eqref{e:square_norm_Psi_nonormalized} can be bounded from above and below as
$$
C_1 2^{-j} \int_{|x| > \epsilon} |x|^{-2d} |\widehat{\psi}(2^{-j}x)|^2 dx \leq 2^{-j} \int_{|x| > \epsilon} |h_2(x)|^2 |\widehat{\psi}(2^{-j}x)|^2 dx \leq C_2 2^{-j} \int_{|x| > \epsilon} |x|^{-2d} |\widehat{\psi}(2^{-j}x)|^2 dx .
$$
In turn, a change of variables gives
$$
\int_{|x| > \epsilon} |2^{-j}x|^{-2d}2^{-2jd}|\widehat{\psi}(2^{-j}x)|^2 d(2^{-j}x)2^j
= 2^j 2^{-2jd} \int_{|y| > 2^{-j}\epsilon} |y|^{-2d}|\widehat{\psi}(y)|^2 dy
$$
$$
\leq 2^j 2^{-2jd}  \int_{\bbR} |y|^{-2d}|\widehat{\psi}(y)|^2 dy.
$$
The integral is finite (and does not depend on $j$) because of \eqref{e:general_assumption_decay_MRA}, \eqref{e:assumption_number_vanishing_moments}, \eqref{e:psihat_uhat_in_CN(R)} and \eqref{e:assumption_decay_wavelet_Fourier} (see also \eqref{e:psi^_around_zero_u^_around_pi}). Therefore,
$$
C_1 2^{-2jd} \leq 2^{-j} \int_{\bbR} \Big| h_2(x)\widehat{\psi}\Big(\frac{x}{2^j}\Big)\Big|^2 dx
\leq C_2 \Big( 2^{-j(1 + 2N)} + 2^{-2jd}\Big),
$$
which yields the first relation in \eqref{e:decay_Psi_sharp_jk_and_Psi_jk_sharp}. A similar development establishes the asymptotic behavior for $\|\Psi^{j,k}_{\#}\|$. $\Box$\\
\end{proof}


The next proposition establishes \eqref{e:Meyer_Coif_expression_5.3} for $\{ \Psi_{j,k} \}$ and $\{ \Psi^{j,k}\}$.
\begin{proposition}\label{p:Lip_vaguelet_general}
Under the assumptions $(h2)$, $(W1)$, $(W2)$, $(W4)$ the following Lipschitz property holds:
$$
\max\{ |\Psi_{j,k}(t) - \Psi_{j,k}(t')|,|\Psi^{j,k}(t) - \Psi^{j,k}(t')| \}\leq C 2^{j(1/2 + \alpha_2)}|t - t'|^{\alpha_2}, \quad 0 < \alpha_2 < 1.
$$
\end{proposition}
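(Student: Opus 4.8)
The plan is to prove the Lipschitz bound \eqref{e:Meyer_Coif_expression_5.3} directly via Fourier inversion. By Lemma \ref{l:bases_are_welldefined} we have $\widehat{\Psi}^{\#}_{j,k}\in L^1(\bbR)$, so $\Psi^{\#}_{j,k}$ is given pointwise by $\Psi^{\#}_{j,k}(t)=\frac{1}{2\pi}\int_{\bbR}e^{ixt}\widehat{\Psi}^{\#}_{j,k}(x)\,dx$, and hence
$$
\Psi^{\#}_{j,k}(t)-\Psi^{\#}_{j,k}(t')=\frac{1}{2\pi}\int_{\bbR}\big(e^{ix(t-t')}-1\big)\,e^{ixt'}\,\widehat{\Psi}^{\#}_{j,k}(x)\,dx .
$$
First I would invoke the elementary interpolation inequality $|e^{i\theta}-1|\le 2^{1-\alpha_2}|\theta|^{\alpha_2}$ for $0<\alpha_2<1$ (obtained from $|e^{i\theta}-1|\le\min\{2,|\theta|\}$), with $\theta=x(t-t')$. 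This reduces everything to an $L^1$ bound on $|x|^{\alpha_2}\widehat{\Psi}^{\#}_{j,k}$:
$$
|\Psi^{\#}_{j,k}(t)-\Psi^{\#}_{j,k}(t')|\le C\,|t-t'|^{\alpha_2}\,2^{-j/2}\int_{\bbR}|x|^{\alpha_2}|h_2(x)|\,\Big|\widehat{\psi}\Big(\frac{x}{2^j}\Big)\Big|\,dx .
$$

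Next I would estimate this integral by splitting it at $|x|=\epsilon$. On $|x|>\epsilon$ I use $|h_2(x)|\le C|x|^{-d}$ from \eqref{e:bounds-h_2} together with the decay in \eqref{e:assumption_decay_wavelet_Fourier}, and then change variables $y=2^{-j}x$; this produces the factor $2^{j(1/2+\alpha_2-d)}$ times the $j$-independent integral $\int_{\bbR}|y|^{\alpha_2-d}|\widehat{\psi}(y)|\,dy$. That integral is finite: near $y=0$ the bound $|\widehat{\psi}(y)|=O(|y|^{N})$ from \eqref{e:psi^_around_zero_u^_around_pi} together with $N\ge -1/2+|d|$ from \eqref{e:assumption_number_vanishing_moments} gives local integrability, while near infinity the exponent $\alpha_2-d-|d|-(2+\eta)<-1$ holds because $d+|d|\ge0$ and $\alpha_2<1<1+\eta$. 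On $|x|\le\epsilon$, $h_2$ is bounded by \eqref{e:h2,h2inv_in_C2} and $|\widehat{\psi}(2^{-j}x)|=O(|2^{-j}x|^{N})$, so this contribution is of order $2^{-j(1/2+N)}$.

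I would then divide by the norm. By Proposition \ref{p:decay_norm}, $\|\Psi^{\#}_{j,k}\|\asymp 2^{-jd}$, so the high-frequency term becomes exactly $2^{jd}\cdot 2^{j(1/2+\alpha_2-d)}=2^{j(1/2+\alpha_2)}$, while the low-frequency term becomes $2^{j(d-1/2-N)}$. The latter is dominated by the former since $N\ge|d|-1/2$ forces $d-1/2-N\le 1/2+\alpha_2$ (indeed $\le 0$ when $d\ge0$ and $\le 2d<0$ when $d<0$), yielding $|\Psi_{j,k}(t)-\Psi_{j,k}(t')|\le C\,2^{j(1/2+\alpha_2)}|t-t'|^{\alpha_2}$. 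The estimate for $\Psi^{j,k}$ is identical with $h_2$ replaced by $\overline{h_2^{-1}}$, $d$ by $-d$, and $\|\Psi^{j,k}_{\#}\|\asymp 2^{jd}$ from \eqref{e:decay_Psi_sharp_jk_and_Psi_jk_sharp}; the integrability conditions hold by the same use of \eqref{e:assumption_number_vanishing_moments} and \eqref{e:assumption_decay_wavelet_Fourier}. I expect the main obstacle to be the exponent bookkeeping—verifying that the low-frequency remainder, which carries the ``wrong'' power of $2^{j}$, is absorbed by the principal term after normalization, and that both the near-zero and near-infinity integrals converge. This is precisely where the quantitative vanishing-moment hypothesis \eqref{e:assumption_number_vanishing_moments} and the Fourier decay \eqref{e:assumption_decay_wavelet_Fourier} are essential.
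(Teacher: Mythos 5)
Your proposal is correct and follows essentially the same route as the paper: the interpolation bound $|e^{i\theta}-1|\le 2^{1-\alpha_2}|\theta|^{\alpha_2}$ is exactly the paper's two-region split at $|x|=|t-t'|^{-1}$ packaged as a single inequality, and the rest is the same frequency decomposition followed by normalization via Proposition \ref{p:decay_norm}. The only divergence is that you split at $|x|=\epsilon$ \emph{before} rescaling, so your low-frequency piece needs only the boundedness of $h_2$ near the origin from \eqref{e:h2,h2inv_in_C2} and the $O(|y|^{N})$ vanishing of $\widehat{\psi}$, whereas the paper splits at $|y|=\epsilon$ \emph{after} rescaling and must invoke the local-scaling hypothesis \eqref{e:assumption_local_scaling}; both versions close correctly, and your exponent bookkeeping ($2^{j(d-1/2-N)}\le 1$ for $j\ge 0$ via \eqref{e:assumption_number_vanishing_moments}) is sound.
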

\begin{proof}
We adapt the argument in Mallat \cite{mallat:1998}, p.\ 165. We first consider $\Psi_{j,k}$. Without loss of generality, assume that $t \neq t'$. Then
$$
\frac{|\Psi^{\#}_{j,k}(t) - \Psi^{\#}_{j,k}(t')|}{|t-t'|^{\alpha_2}} = \frac{2^{j/2}}{2 \pi} \Big| \int_{\bbR} \frac{(e^{itx}- e^{it'x})}{|t -t'|^{\alpha_2}} e^{-i 2^{-j} k x }\frac{1}{2^j} \widehat{\psi}\Big(\frac{x}{2^j}\Big)h_2(x) dx \Big|
$$
\begin{equation}\label{e:Lipschitz_general_full_term}
\leq \frac{2^{j/2}}{2 \pi} \int_{\bbR} \frac{|e^{itx}- e^{it'x}|}{|t -t'|^{\alpha_2}}\frac{1}{2^j} \Big| \widehat{\psi}\Big(\frac{x}{2^j}\Big)\Big| |h_2(x)| dx ,
\end{equation}
where $\Psi^{\#}_{j,k}(t) - \Psi^{\#}_{j,k}(t')$ is well-defined by Lemma \ref{l:bases_are_welldefined}. We can break up the integral \eqref{e:Lipschitz_general_full_term} into two subdomains. On the one hand,
$$
\frac{2^{j/2}}{2 \pi} \int_{\{x:|t-t'|^{-1} \leq |x|\} } \frac{|e^{itx}- e^{it'x}|}{|t -t'|^{\alpha_2}}\frac{1}{2^j} \Big| \widehat{\psi}\Big(\frac{x}{2^j}\Big)\Big| |h_2(x)| dx
\leq \frac{2^{j/2}}{2 \pi} \int_{\{x:|t-t'|^{-1} \leq |x|\} } 2|x|^{\alpha_2}\frac{1}{2^j} \Big| \widehat{\psi}\Big(\frac{x}{2^j}\Big)\Big| |h_2(x)| dx
$$
$$
= \frac{2^{j(1/2 + \alpha_2)}}{2 \pi} 2\int_{\{ y:2^{-j}|t-t'|^{-1} \leq |y| \} } |y|^{\alpha_2} | \widehat{\psi}(y)| |h_2(2^{j}y)| dy.
$$
On the other hand,
$$
\frac{2^{j/2}}{2 \pi} \int_{\{x:|t-t'|^{-1} > |x|\} } \frac{|e^{itx}- e^{it'x}|}{|t -t'|^{\alpha_2}} \frac{1}{2^j} \Big| \widehat{\psi}\Big(\frac{x}{2^j}\Big)\Big| |h_2(x)| dx
\leq \frac{2^{j/2}}{2 \pi} \int_{\{x:|t-t'|^{-1} > |x|\} } |x|^{\alpha_2} \frac{1}{2^j} \Big| \widehat{\psi}\Big(\frac{x}{2^j}\Big)\Big| |h_2(x)| dx
$$
$$
= \frac{2^{j(1/2 + \alpha_2)}}{2 \pi} \int_{\{y:2^{-j}|t-t'|^{-1} > |y|\} } |y|^{\alpha_2} | \widehat{\psi}(y)| |h_2(2^{j}y)| dy.
$$
Therefore, \eqref{e:Lipschitz_general_full_term} is bounded by $2 \frac{2^{j(1/2 + \alpha_2)}}{2 \pi} \int_{\bbR} |y|^{\alpha_2} |h_2(2^{j}y)|  | \widehat{\psi}(y)| dy$. Consequently, by Proposition \ref{p:decay_norm},
$$
\frac{1}{\|\Psi^{\#}_{j,k}\|}  |\Psi^{\#}_{j,k}(t) - \Psi^{\#}_{j,k}(t')| \leq C \frac{1}{2^{-jd}} 2^{j(1/2 + \alpha_2)}|t-t'|^{\alpha_2}
\int_{\bbR}|\widehat{\psi}(y)| |h_2(2^j y)| |y|^{\alpha_2} dy.
$$

We now show that $\frac{1}{2^{-jd}} \int_{\bbR} |\widehat{\psi}(y)| |h_2(2^j y)| |y|^{\alpha_2}dy$ is bounded by a finite constant that does not depend on $j$.
By \eqref{e:assumption_local_scaling}, \eqref{e:general_assumption_decay_MRA}, \eqref{e:assumption_number_vanishing_moments} and \eqref{e:psihat_uhat_in_CN(R)},
$$
\frac{1}{2^{-jd}} \int_{|y| \leq \epsilon} |\widehat{\psi}(y)| |h_2(2^jy)| |y|^{\alpha_2} dy \leq C \int_{|y|\leq \epsilon} |\widehat{\psi}(y)| \frac{1}{|y|^{N-2}}|y|^{\alpha_2} dy
$$
$$
 = C \int_{|y|\leq \epsilon} | O(y^N) |\frac{1}{|y|^{N-2}}|y|^{\alpha_2} dy \leq C' < \infty.
$$
Moreover, by \eqref{e:bounds-h_2}, \eqref{e:assumption_decay_wavelet_Fourier},
\begin{equation}\label{e:Lip_property_integral_const_is_finite}
\frac{1}{2^{-jd}}\int_{|y| > \epsilon}|\widehat{\psi}(y)||h_2(2^jy)||y|^{\alpha_2} dy
\leq C \frac{1}{2^{-jd}}\int_{|y| > \epsilon} \frac{1}{(1 + |y|)^{|d|+2 + \eta}}\frac{1}{|2^j y|^d} |y|^{\alpha_2} dy.
\end{equation}
The right-hand side of \eqref{e:Lip_property_integral_const_is_finite} is independent of $j$. Furthermore, it is finite because $(|d| + d) + 2 + (\eta - \alpha_2) > 0 + 2 - 1 = 1$.

As for $\Psi^{j,k}$, the same type of argument leads to
$$
\frac{1}{\|\Psi^{j,k}_{\#}\|} |\Psi^{j,k}_{\#}(t) - \Psi^{j,k}_{\#}(t')| \leq \frac{C}{2^{jd}} 2^{j(1/2 + \alpha_2)}
|t-t'|^{\alpha_2} \int_{\bbR} |\widehat{\psi}(y)| |h^{-1}_2(2^j y)| |y|^{\alpha_2} dy.
$$
On the one hand, by \eqref{e:bounds-h_2} and \eqref{e:assumption_decay_wavelet_Fourier},
$$
\frac{1}{2^{jd}}\int_{|y|> \epsilon} |\widehat{\psi}(y)| |h^{-1}_2(2^j y)| |y|^{\alpha_2} dy \leq
\frac{C}{2^{jd}}\int_{|y|> \epsilon} |\widehat{\psi}(y)| |2^j y|^d |y|^{\alpha_2} dy
$$
$$
\leq \int_{|y|> \epsilon} \frac{1}{(1 + |y|)^{|d|+(2 + \eta)}} |y|^d |y|^{\alpha_2} dy \leq C < \infty,
$$
where finiteness holds by the same reasoning as that for \eqref{e:Lip_property_integral_const_is_finite}. On the other hand, by \eqref{e:assumption_local_scaling},
$$
\frac{1}{2^{jd}}\int_{|y| \leq \epsilon} |\widehat{\psi}(y)| |h^{-1}_2(2^j y)| |y|^{\alpha_2} dy \leq
C \int_{|y| \leq \epsilon} |\widehat{\psi}(y)| \frac{1}{|y|^{N-2}} |y|^{\alpha_2} dy \leq C' < \infty. \quad \Box
$$
\end{proof}

The next proposition establishes \eqref{e:Meyer_Coif_expression_5.1} for $\{ \Psi_{j,k} \}$ and $\{ \Psi^{j,k}\}$.
\begin{proposition}\label{p:decay_vaguelet_general}
If the assumptions $(h2)$, $(W1)$, $(W2)$, $(W4)$ hold, then
\begin{equation}\label{e:decay_vaguelet_general}
\max\{|\Psi_{j,k}(t)| ,|\Psi^{j,k}(t)|\} \leq \frac{C 2^{j/2}}{(1 + |2^j t - k|)^{1 + \alpha_1}}, \quad 0 < \alpha_1 < 1,
\end{equation}
for some $\alpha_1 > \alpha_2$, where $\alpha_2$ is as in Proposition \ref{p:Lip_vaguelet_general}.
\end{proposition}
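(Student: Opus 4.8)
The plan is to reduce the estimate to the inverse Fourier representation of $\Psi^{\#}_{j,k}$ and $\Psi^{j,k}_{\#}$ and then use the two-sided norm bounds of Proposition \ref{p:decay_norm} to cancel the scale factor $2^{\mp jd}$. Writing $u = 2^j t - k$ and changing variables $y = 2^{-j}x$ in $\Psi^{\#}_{j,k}(t) = \frac{1}{2\pi}\int_{\bbR} e^{itx}\widehat{\Psi}^{\#}_{j,k}(x)\,dx$, I would obtain
$$
\Psi^{\#}_{j,k}(t) = \frac{2^{j/2}}{2\pi}\int_{\bbR} e^{iuy}\, G_j(y)\,dy, \qquad G_j(y) := h_2(2^j y)\,\widehat{\psi}(y),
$$
which is legitimate since $\widehat{\Psi}^{\#}_{j,k}\in L^1(\bbR)$ by Lemma \ref{l:bases_are_welldefined}. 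The problem then becomes a decay estimate for the inverse Fourier transform of $G_j$. Since the target exponent $1+\alpha_1$ with $\alpha_1<1$ is strictly below $2$, a plain minimum of a zeroth-order and a second-order bound suffices, and no fractional smoothing argument is needed.

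Concretely, I would establish two uniform bounds, $\|G_j\|_{L^1(\bbR)} \leq C\,2^{-jd}$ and $\|G_j''\|_{L^1(\bbR)} \leq C\,2^{-jd}$. The first is precisely the $L^1$ computation already performed (with weight $|y|^{\alpha_2}$) in the proof of Proposition \ref{p:Lip_vaguelet_general}, now with $\alpha_2$ replaced by $0$. For the second, I would differentiate,
$$
G_j''(y) = 2^{2j}h_2''(2^jy)\widehat{\psi}(y) + 2\cdot 2^{j} h_2'(2^jy)\widehat{\psi}'(y) + h_2(2^jy)\widehat{\psi}''(y),
$$
the key point being that in each term the powers of $2^j$ collapse: for $|y|>\epsilon$, assumption \eqref{e:assumption_decay_h2_h2inv} gives $2^{jk}|h_2^{(k)}(2^jy)|\leq C\,2^{-jd}|y|^{-d-k}$, which, combined with the derivative decay \eqref{e:assumption_decay_wavelet_Fourier} of $\widehat{\psi}$, yields an integrable bound times $2^{-jd}$; for $|y|\leq\epsilon$, the local-scaling assumption \eqref{e:assumption_local_scaling} gives $2^{jk}|h_2^{(k)}(2^jy)|\leq C\,2^{-jd}|y|^{-(N-2)}$, which is absorbed by the vanishing-moment behaviour $\widehat{\psi}^{(m)}(y)=O(|y|^{N-m})$ near the origin (an extension of \eqref{e:psi^_around_zero_u^_around_pi} to derivatives, valid since $\widehat{\psi}\in C^N$). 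The same estimates also force $G_j,G_j'\to 0$ at infinity, so that integrating by parts twice is justified and produces
$$
u^2\,\Psi^{\#}_{j,k}(t) = -\frac{2^{j/2}}{2\pi}\int_{\bbR} e^{iuy}\,G_j''(y)\,dy.
$$

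Combining the two bounds gives $|\Psi^{\#}_{j,k}(t)| \leq C\,2^{j/2}2^{-jd}\min(1,|u|^{-2})$, and since $\min(1,|u|^{-2})\leq C(1+|u|)^{-1-\alpha_1}$ for every $\alpha_1\in(0,1)$, dividing by $\|\Psi^{\#}_{j,k}\|$ and invoking the lower bound $\|\Psi^{\#}_{j,k}\|\geq C_1 2^{-jd}$ of Proposition \ref{p:decay_norm} cancels the factor $2^{-jd}$ and yields the claimed bound for $\Psi_{j,k}$; because $\alpha_1$ may be taken arbitrarily close to $1$, the requirement $\alpha_1>\alpha_2$ is automatically met. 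The argument for $\Psi^{j,k}$ is identical after replacing $h_2$ by $h_2^{-1}$ throughout, using the $p=-1$ cases of \eqref{e:assumption_decay_h2_h2inv} and \eqref{e:assumption_local_scaling} and the lower bound $\|\Psi^{j,k}_{\#}\|\geq C_1' 2^{jd}$, so that the compensating factor is now $2^{jd}$.

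The main obstacle I anticipate is the uniform-in-$j$ control of $\|G_j''\|_{L^1}$ near $y=0$, where $h_2$ (or $h_2^{-1}$) may be singular. This is exactly the point at which the interplay between the high-frequency decay \eqref{e:assumption_decay_h2_h2inv}, the low-frequency local-scaling \eqref{e:assumption_local_scaling}, and the order-$N$ flatness of $\widehat{\psi}$ at the origin must be balanced; dropping any one of these would break either the integrability at $0$ or the exact cancellation of the $2^j$ powers that produces the clean $2^{\mp jd}$ prefactor.
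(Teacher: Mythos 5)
Your proposal is correct and follows essentially the same route as the paper: the change of variables $y=2^{-j}x$, two integrations by parts with vanishing boundary terms, the split of the resulting integral into $|y|\leq\epsilon$ (handled by \eqref{e:assumption_local_scaling} together with the order-$N$ flatness of $\widehat{\psi}$ and its derivatives at the origin) and $|y|>\epsilon$ (handled by \eqref{e:bounds-h_2}, \eqref{e:assumption_decay_h2_h2inv} and \eqref{e:assumption_decay_wavelet_Fourier}), and finally the cancellation of $2^{\mp jd}$ via the lower bounds of Proposition \ref{p:decay_norm}. The only difference is presentational: you make explicit the zeroth-order bound $\|G_j\|_{L^1}\leq C2^{-jd}$ needed for small $|2^jt-k|$, which the paper leaves implicit.
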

\begin{proof}
By \eqref{e:h2,h2inv_in_C2}, \eqref{e:assumption_number_vanishing_moments} and \eqref{e:psihat_uhat_in_CN(R)}, we can apply a change of variables and integration by parts to obtain
$$
\Psi^{\#}_{j,k}(t) = \frac{2^{j/2}}{2\pi} \int_{\bbR} e^{itx} \frac{1}{2^j}e^{-ik2^{-j}x}h_2(x) \widehat{\psi}(2^{-j}x)dx
= \frac{2^{j/2}}{2\pi} \int_{\bbR} e^{i (2^j t - k) y} h_2(2^j y) \widehat{\psi}(y)dy
$$
$$
= \frac{2^{j/2}}{2\pi} \Big( \frac{e^{i(2^jt - k)y}}{i(2^j t - k)} [h_2(2^j y) \widehat{\psi}(y)]\Big|^{\infty}_{y = - \infty}
- \int_{\bbR} \frac{e^{i(2^jt - k)y}}{i(2^j t - k)} [2^j h^{'}_2(2^j y) \widehat{\psi}(y) + h_2(2^j y)\widehat{\psi}'(y)] dy \Big).
$$
The first term is zero due to \eqref{e:h2,h2inv_in_C2}, \eqref{e:bounds-h_2}, \eqref{e:assumption_decay_wavelet_Fourier}, and the second term is well-defined due to \eqref{e:h2,h2inv_in_C2}, \eqref{e:bounds-h_2}, \eqref{e:assumption_decay_h2_h2inv}, \eqref{e:assumption_decay_wavelet_Fourier}. Again by \eqref{e:h2,h2inv_in_C2}, \eqref{e:assumption_number_vanishing_moments}, \eqref{e:psihat_uhat_in_CN(R)} and integration by parts,
$$
\Psi^{\#}_{j,k}(t) = - \frac{2^{j/2}}{2\pi} \Big( \frac{e^{i(2^jt - k)y}}{[i(2^jt - k)y]^2} [2^j h^{'}_2(2^j y) \widehat{\psi}(y) + h_2(2^j y)\widehat{\psi}'(y)]\Big|^{\infty}_{y=-\infty}
 $$
\begin{equation}\label{e:integbyparts_secondorder_term}
 - \int_{\bbR}
\frac{e^{i(2^jt - k)y}}{[i(2^jt - k)y]^2} [h^{''}_2(2^jy)2^{2j} \widehat{\psi}(y) + 2 h^{'}_2(2^jy)2^j \widehat{\psi}'(y)
+ h_2(2^jy) \widehat{\psi}''(y)]dy \Big).
\end{equation}
The first term is zero due to \eqref{e:bounds-h_2}, \eqref{e:assumption_decay_h2_h2inv}, \eqref{e:assumption_decay_wavelet_Fourier}, and the second term is well-defined due to \eqref{e:h2,h2inv_in_C2}, \eqref{e:bounds-h_2}, \eqref{e:assumption_decay_h2_h2inv}, \eqref{e:assumption_number_vanishing_moments}, \eqref{e:psihat_uhat_in_CN(R)}, \eqref{e:assumption_decay_wavelet_Fourier}.

We look at each term of the sum \eqref{e:integbyparts_secondorder_term} separately, and break them up into the sum $\int_{|y| \leq \epsilon} + \int_{|y| > \epsilon}$. By \eqref{e:assumption_local_scaling}, \eqref{e:general_assumption_decay_MRA}, \eqref{e:assumption_number_vanishing_moments}, \eqref{e:psihat_uhat_in_CN(R)},
$$
\frac{1}{2^{-jd}} \int_{|y| \leq \epsilon}  |h_2(2^jy)| | \widehat{\psi}''(y)| dy
\leq C \int_{|y| \leq \epsilon}  \frac{1}{|y|^{N-2}} | O(y^{N-2})| dy \leq C' < \infty.
$$
On the other hand, by \eqref{e:bounds-h_2} and \eqref{e:assumption_decay_wavelet_Fourier},
$$
\frac{1}{2^{-jd}} \int_{|y| > \epsilon}  |h_2(2^jy)| | \widehat{\psi}''(y)| dy \leq C \int_{|y| > \epsilon}  |y|^{-d} \frac{1}{(1 + |y|)^{|d| + (2 + \eta)}} dy \leq C' < \infty.
$$
The remaining terms in \eqref{e:integbyparts_secondorder_term} can be treated similarly. This establishes the bound in \eqref{e:decay_vaguelet_general} for $\Psi_{j,k}(t)$ with $\alpha_1 = 1$, and hence also for $0 < \alpha_1 < 1$ with $\alpha_1$ greater than $\alpha_2 \in (0,1)$.

We now turn to $\Psi^{j,k}$. By a similar procedure, we obtain
$$
\Psi^{j,k}(t) = \frac{2^{j/2}}{2\pi}\frac{1}{2^{jd}} \int_{\bbR} \frac{e^{i(2^jt-k)x}}{[i(2^jt-k)y]^2}
[(h^{-1}_2)^{''}(2^jy)2^{2j} \widehat{\psi}(y) + 2 (h^{-1}_2)^{'}(2^jy)2^j \widehat{\psi}'(y)
+ h^{-1}_2(2^jy) \widehat{\psi}''(y)]dy .
$$
As done above, we look at each term of the sum separately, and break them up into the sum $\int_{|y| \leq \epsilon} + \int_{|y| > \epsilon}$. By \eqref{e:assumption_local_scaling}, \eqref{e:general_assumption_decay_MRA}, \eqref{e:assumption_number_vanishing_moments}, \eqref{e:psihat_uhat_in_CN(R)},
$$
\frac{1}{2^{jd}} \int_{|y| \leq \epsilon}  |h^{-1}_2(2^jy)| | \widehat{\psi}''(y)| dy
\leq C \int_{|y| \leq \epsilon}  \frac{1}{|y|^{N-2}} | O(y^{N-2})| dy \leq C' < \infty.
$$
On the other hand, by \eqref{e:bounds-h_2} and \eqref{e:assumption_decay_wavelet_Fourier},
$$
\frac{1}{2^{jd}} \int_{|y| > \epsilon}  |h_2(2^jy)| | \widehat{\psi}''(y)| dy \leq C \int_{|y| > \epsilon}  |y|^{d} \frac{1}{(1 + |y|)^{|d| + (2 + \eta)}} dy \leq C' < \infty.
$$
The remaining terms in the sum can be treated similarly. $\Box$\\
\end{proof}

We are now ready to conclude the proof of Theorem \ref{t:main-vaguelet}.\\

\noindent {\sc Proof of Theorem \ref{t:main-vaguelet}}: In view of Propositions \ref{p:Lip_vaguelet_general} and \ref{p:decay_vaguelet_general}, it only remains to show \eqref{e:Meyer_Coif_expression_5.2}. By \eqref{e:h2,h2inv_in_C2}, $h_2$ is bounded over compact intervals. Thus, for $N \geq 1$, $\widehat{\Psi}^{\#}_{j,k}(0) = 0$. Moreover, by Proposition \ref{p:decay_vaguelet_general}, $\Psi^{\#}_{j,k} \in L^1(\bbR)$. Thus, $\int_{\bbR}\Psi^{\#}_{j,k}(t) dt = 0$ (analogously for $\Psi^{j,k}_{\#}(t)$). $\Box$\\


\subsection{On quasi-homogeneity}\label{s:necessity_frac_decay}

In this section, we take up the issue raised in Remark \ref{r:exp(-x^2)}. In view of the framework laid out in Section \ref{s:assumptions-result}, one natural question is whether we can dispense with the quasi-homogeneity property \eqref{e:bounds-h_2}. Homogeneity assumptions or properties are commonly present in the wavelet analysis of stochastic processes (Meyer et al.\ \cite{MST:1999}, Benassi and Jaffard \cite{benassi:jaffard:1994}). We answer this question in the negative in the sense that, when \eqref{e:bounds-h_2} is violated, the induced family \eqref{e:new-func-transform-normalized} does not form vaguelets, even under Fourier domain compactness. We will provide a counterexample to the property \eqref{e:Meyer_Coif_expression_5.1} when the function $h_2$ in \eqref{e:psihat_uppersharp} takes the apparently simple exponential-like form
\begin{equation}\label{e:counterexample_exp_kernel}
h_2(x) = e^{-|x|^\gamma}, \quad \gamma \in (0,\infty).
\end{equation}
Throughout this section, we consider a wavelet function such that
\begin{equation}\label{e:even_wave}
\widehat{\psi}(x) = \widehat{\psi}(-x), \quad \widehat{\psi} \in \bbR,
\end{equation}
\begin{equation}\label{e:supp_psi_counterexample}
\textnormal{supp} \hspace{1mm} \{\widehat{\psi}\} \subseteq [-a,a],
\end{equation}
\begin{equation}\label{e:psihat_at_boundary}
\widehat{\psi} \in C^{r-1}(\bbR), \quad \widehat{\psi}(a) = \widehat{\psi}'(a) = \hdots = \widehat{\psi}^{(r-1)}(a) = 0, \quad \widehat{\psi}^{(r)}(a-) \neq 0, \quad r \in \bbN,
\end{equation}
for some $a > 0$. Condition \eqref{e:even_wave} is assumed for mathematical convenience. For instance, the Meyer wavelet can be written as $e^{ix} \widehat{\psi}(x)$, where $\widehat{\psi}$ satisfies \eqref{e:even_wave}. The term $e^{ix}$ can be incorporated as a shift in the developments below. Condition \eqref{e:psihat_at_boundary} is typically satisfied by a Meyer wavelet when $\widehat{u} \in C^{r-1}(\bbR)$ (see Mallat \cite{mallat:1998}, p.\ 247). In this case, $\widehat{\psi} \in C^{r-1}(\bbR)$, and the discontinuities of the $r$-th derivative of $\widehat{u}$ are generally at the boundary values $|x| = \pi/3, 2 \pi/3$.



For notational convenience, let
\begin{equation}\label{e:counterexample_decay_def_g}
g(x) = \frac{\widehat{\psi}(x)}{(a-x)^{r}}, \quad x \geq 0, \quad g(-x) = g(x).
\end{equation}
Then $\textnormal{supp} \hspace{1mm}\{g \}\subseteq [-a,a]$ and
$$
g(a-) \neq 0.
$$
Define the function $f_j$ in the Fourier domain as
\begin{equation}\label{e:fj}
\widehat{f}_{j}(x) = e^{2^{j \gamma}|x|^\gamma} \widehat{\psi}(x).
\end{equation}
Then we can write
$$
\widehat{\Psi}^{j,k}_{\#}(x) = e^{2^{j \gamma}|2^{-j}x|^\gamma} 2^{-j/2} e^{-i 2^{-j}x k} \widehat{\psi}(2^{-j}x) = 2^{j/2} \widehat{f_{j}(2^jt - k)}(x).
$$
In terms of $f_j$, the property (\ref{e:Meyer_Coif_expression_5.1}) amounts to
\begin{equation}\label{e:bound_fj_to_be_disproved}
\frac{|f_j(u)|}{\norm{f_j}} \leq \frac{C}{(1 + |u|)^{1 + \alpha}}.
\end{equation}
Our goal is to obtain the asymptotic behavior of $\|f_j\|$, and compare it to that of $f_j(u)$ for a special sequence of values of $u$. This is done in the next two lemmas.

For later reference, we write $a_j \sim b_j, j \rightarrow \infty$, when $\lim_{j \rightarrow \infty} \frac{a_j}{b_j} =1$.
\begin{lemma}\label{l:counterexample_decay_norm}
Let $f_j$ be as in \eqref{e:fj}. Then, for some $C > 0$,
\begin{equation}\label{e:counterexample_decay_norm}
\norm{f_j} \sim C e^{a^{\gamma} 2^{j\gamma}} 2^{-j\gamma(1+2r)/2}, \quad j \rightarrow \infty.
\end{equation}

\end{lemma}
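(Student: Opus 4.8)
The goal is to pin down the large-$j$ asymptotics of $\|f_j\|$ where $\widehat{f}_j(x) = e^{2^{j\gamma}|x|^\gamma}\widehat{\psi}(x)$. By Plancherel, $\|f_j\|^2 = \frac{1}{2\pi}\int_{\bbR}|\widehat{f}_j(x)|^2\,dx = \frac{1}{2\pi}\int_{\bbR} e^{2\cdot 2^{j\gamma}|x|^\gamma}\widehat{\psi}(x)^2\,dx$, using that $\widehat{\psi}$ is real by \eqref{e:even_wave}. Since $\mathrm{supp}\{\widehat{\psi}\}\subseteq[-a,a]$ by \eqref{e:supp_psi_counterexample} and $\widehat{\psi}$ is even, this is $\frac{1}{\pi}\int_0^a e^{2\cdot 2^{j\gamma}x^\gamma}\widehat{\psi}(x)^2\,dx$. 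The integrand is dominated by the factor $e^{2\cdot 2^{j\gamma}x^\gamma}$, which is sharply peaked at the right endpoint $x=a$ as $j\to\infty$; this is a Laplace-type integral. The plan is to substitute the boundary behavior of $\widehat{\psi}$ from \eqref{e:psihat_at_boundary}, localize the integral near $x=a$, and evaluate the resulting Laplace integral.

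First I would record the boundary expansion. By \eqref{e:psihat_at_boundary} and \eqref{e:counterexample_decay_def_g}, near $x=a$ we have $\widehat{\psi}(x) = g(x)(a-x)^r$ with $g(a-)\neq 0$, so $\widehat{\psi}(x)^2 \sim g(a-)^2 (a-x)^{2r}$ as $x\to a-$. Next, set $s = a-x$ and write $x^\gamma = a^\gamma(1 - s/a)^\gamma \approx a^\gamma - \gamma a^{\gamma-1}s$ for small $s$, so that $e^{2\cdot 2^{j\gamma}x^\gamma} \approx e^{2a^\gamma 2^{j\gamma}}\,e^{-2\gamma a^{\gamma-1}2^{j\gamma}s}$. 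The square norm then behaves like
$$
\|f_j\|^2 \sim \frac{g(a-)^2}{\pi}\,e^{2a^\gamma 2^{j\gamma}}\int_0^{\infty} e^{-2\gamma a^{\gamma-1}2^{j\gamma}s}\,s^{2r}\,ds,
$$
where extending the upper limit to $\infty$ is justified because the exponential decay concentrates all the mass in a shrinking neighborhood of $s=0$. The remaining integral is a Gamma integral: $\int_0^\infty e^{-\lambda s}s^{2r}\,ds = \Gamma(2r+1)\lambda^{-(2r+1)}$ with $\lambda = 2\gamma a^{\gamma-1}2^{j\gamma}$. This yields $\|f_j\|^2 \sim C'\,e^{2a^\gamma 2^{j\gamma}}\,2^{-j\gamma(2r+1)}$ for an explicit constant $C'$, and taking square roots gives precisely \eqref{e:counterexample_decay_norm} with $C = \sqrt{C'}$.

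The main obstacle is making the Laplace asymptotics rigorous rather than heuristic, i.e.\ controlling the error in replacing $\widehat{\psi}(x)^2$ by its leading term and $x^\gamma$ by its linearization, and justifying the extension of the domain to $[0,\infty)$. The clean way is a two-sided estimate: for any small $\delta>0$, on $[a-\delta,a]$ the continuity of $g$ and the Taylor expansion of $x^\gamma$ give matching upper and lower bounds of the stated form (with constants tending to $g(a-)^2$ and the correct exponential rate as $\delta\to0$), while the contribution of $[0,a-\delta]$ is $O(e^{2\cdot 2^{j\gamma}(a-\delta)^\gamma})$, which is exponentially smaller than $e^{2a^\gamma 2^{j\gamma}}$ and hence negligible after the $2^{-j\gamma(2r+1)}$ normalization. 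Letting $\delta\to0$ after $j\to\infty$ sharpens the constants to equality in the $\sim$ sense. One should also note that the case $\gamma<1$ (where $x^\gamma$ has unbounded derivative at $0$ but is perfectly smooth near $x=a>0$) causes no difficulty, since the localization is entirely at the interior-positive endpoint $a$. $\Box$
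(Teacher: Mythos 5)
Your proposal is correct and follows essentially the same route as the paper: both treat $\|f_j\|^2$ as a Laplace-type integral concentrated at the endpoint $x=a$, use the boundary factorization $\widehat{\psi}(x)=g(x)(a-x)^r$ to extract the order of vanishing, and reduce to a Gamma integral yielding the factor $2^{-j\gamma(2r+1)}$. The only cosmetic difference is that the paper makes the exact substitution $y=2^{j\gamma}(a^\gamma-x^\gamma)$ and invokes dominated convergence over three subregions, whereas you linearize $x^\gamma$ near $a$ and run a two-sided $\delta$-argument; both are valid implementations of the same asymptotic analysis.
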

\begin{proof}
For some constant $C> 0$,
$$
\norm{f_j}^2 = C \int_\bbR |\widehat{f}_j(x)|^2 dx = 2 C \int^{a}_0 e^{ 2^{j\gamma}2x^\gamma} |\widehat{\psi}(x)|^2 dx
$$
\begin{equation}\label{e:normsq_fj}
= 2 C e^{a^{\gamma} 2^{j \gamma +1}} \int^{a}_0 e^{-2^{j\gamma+1}(a^\gamma-x^\gamma)}|a-x|^{2r}g^2(x) dx,
\end{equation}
where $g$ is as in \eqref{e:counterexample_decay_def_g}. After the change of variables $2^{j \gamma}(a^\gamma-x^\gamma)=y$, the integral in (\ref{e:normsq_fj}) becomes
$$
e^{a^\gamma 2^{j\gamma+1}} 2^{-j \gamma} \int^{a^\gamma 2^{j\gamma}}_0 e^{-2y} \Big|a -\Big(a^\gamma - \frac{y}{2^{j\gamma}} \Big)^{1/\gamma} \Big|^{2r} g^2 \Big(\Big(a^\gamma - \frac{y}{2^{j\gamma}} \Big)^{1/\gamma} \Big) \frac{1}{\gamma}\Big(a^\gamma - \frac{y}{2^{j\gamma}}\Big)^{1/\gamma - 1} dy.
$$
\begin{equation}\label{e:norm_f_after_changevar}
=: e^{a^\gamma 2^{j\gamma+1}} 2^{-j \gamma} \frac{1}{\gamma} \Big( \int^{a^\gamma 2^{j\gamma}/2}_0 + \int^{a^\gamma 2^{j\gamma} - \delta_0}_{a^\gamma 2^{j\gamma}/2} + \int^{a^\gamma 2^{j\gamma}}_{a^\gamma 2^{j\gamma} - \delta_0} \Big) \rho_j(y) dy
\end{equation}
for some $\delta_0 > 0$ and large enough $j$. We now look at the first integral term in (\ref{e:norm_f_after_changevar}). By dominated convergence, we obtain
$$
\int^{a^\gamma 2^{j\gamma}/2}_0 \rho_j(y) dy =  \int^{a^\gamma 2^{j\gamma}/2}_{0} e^{-2y} \Big| \frac{y}{2^{j\gamma}} \Big|^{2r} \Big| \frac{(a^\gamma - y/2^{j\gamma})^{1/\gamma} - a}{(-y/2^{j\gamma})}\Big|^{2r} g^2 \Big(\Big(a^\gamma - \frac{y}{2^{j\gamma}} \Big)^{1/\gamma} \Big) \Big( a^\gamma - \frac{y}{2^{j\gamma}}\Big)^{1/\gamma - 1} dy
$$
$$
\sim \frac{1}{(2^{j\gamma})^{2r}} g^2(a-) \Big(\frac{1}{\gamma} \Big)^{2r} (a^{\gamma})^{1/\gamma - 1}\int^{\infty}_0 e^{-2y}y^{2r} dy,
$$
where the constants $g^2(a-)$, $(\frac{1}{\gamma})^{2r}$, $\int^{\infty}_0 e^{-2y}y^{2r} dy$ are strictly greater than zero. Furthermore, by making the change of variables $y = a^\gamma 2^{j\gamma}-w$,
$$
\int^{a^\gamma 2^{j\gamma}}_{a^\gamma 2^{j\gamma} - \delta_0} \rho_j(y)dy= \int^{a 2^{j\gamma}}_{a 2^{j\gamma}- \delta_0} e^{-2y} \Big| a - \Big( a^\gamma - \frac{y}{2^{j\gamma}}\Big)^{1/\gamma} \Big|^{2r} g^2\Big( \Big(a^\gamma - \frac{y}{2^{j\gamma}} \Big)^{1/\gamma} \Big) \Big( a^\gamma - \frac{y}{2^{j\gamma}}\Big)^{1/\gamma -1} dy
$$
\begin{equation}\label{e:counterexample_decay_norm_changevar}
= e^{-a^\gamma 2^{j\gamma+1}} \Big( \frac{1}{2^{j\gamma}} \Big)^{1/\gamma-1}\int^{\delta_0}_0 e^{2w} \Big| a - \Big( \frac{w}{2^{j\gamma}}\Big)^{1/\gamma} \Big|^{2r} g^2\Big( \Big( \frac{w}{2^{j\gamma}}\Big)^{1/\gamma}\Big) w^{1/\gamma -1}dw.
\end{equation}
By dominated convergence and \eqref{e:psihat_at_boundary}, the integral term in \eqref{e:counterexample_decay_norm_changevar} goes to a constant as $j \rightarrow \infty$. Consequently, the expression \eqref{e:counterexample_decay_norm_changevar} goes to zero faster than $1 / (2^{j\gamma})^{2r}$. Moreover, $\int^{a^\gamma 2^{j\gamma} - \delta_0}_{a^\gamma 2^{j\gamma}/2} \rho_j(y)dy$ can be written as
\begin{equation}\label{e:integral_between_2jr-delta0_and_2jr/2}
\frac{1}{(2^{j\gamma})^{2r}}
\int^{a^\gamma 2^{j\gamma} - \delta_0}_{a^\gamma 2^{j\gamma}/2} e^{-2y} |y|^{2r} \Big| \frac{(a^\gamma - y/2^{j\gamma})^{1/\gamma} - a}{(- y/2^{j\gamma})} \Big|^{2r}g^2\Big(\Big(
 a^\gamma - \frac{y}{2^{j\gamma}}\Big)^{1/\gamma} \Big) \Big(a^\gamma - \frac{y}{2^{j\gamma}}  \Big)^{1/\gamma - 1}dy
\end{equation}
Since the terms $| \frac{(a^\gamma - y/2^{j\gamma})^{1/\gamma} - a}{(- y/2^{j\gamma})} |^{2r}$ and $(a^\gamma- \frac{y}{2^{j\gamma}})^{1/\gamma - 1}$ are uniformly bounded in $j \in \bbN$ for $a^\gamma 2^{j\gamma}/2 \leq y \leq a^\gamma 2^{j\gamma} - \delta_0$, the integral in \eqref{e:integral_between_2jr-delta0_and_2jr/2} goes to zero as $j \rightarrow \infty$. Thus, (\ref{e:norm_f_after_changevar}) is asymptotically equivalent, up to a constant, to $e^{a^\gamma 2^{j\gamma+1}}2^{-j\gamma(1+2r)}$.
Consequently, \eqref{e:counterexample_decay_norm} holds. $\Box$
\end{proof}
\begin{lemma}\label{l:counterexample_fj_over_a_particular_seq}
Let $f_j$ be defined by \eqref{e:fj}. Then, for some $C > 0$,
\begin{equation}\label{e:counterexample_fj_over_a_particular_seq}
f_j\Big(\lfloor 2^{j \gamma}\rfloor \frac{2 \pi}{a} \Big) \sim C e^{a^\gamma 2^{j \gamma}} 2^{-j\gamma(1+r)}, \quad j \rightarrow \infty.
\end{equation}
\end{lemma}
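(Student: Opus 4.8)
The plan is to compute the value $f_j(u)$ at the special frequency $u = \lfloor 2^{j\gamma}\rfloor \frac{2\pi}{a}$ directly from the inverse Fourier transform of $\widehat f_j$, and extract its leading asymptotic order as $j\to\infty$. By definition $f_j(u) = \frac{1}{2\pi}\int_{\bbR} e^{iux}\widehat f_j(x)\,dx = \frac{1}{2\pi}\int_{-a}^{a} e^{iux} e^{2^{j\gamma}|x|^\gamma}\widehat\psi(x)\,dx$, using the compact support \eqref{e:supp_psi_counterexample}. The dominant contribution comes from the neighborhood of the endpoints $x=\pm a$, where the integrand $e^{2^{j\gamma}x^\gamma}$ peaks (it is maximized at $|x|=a$, giving the factor $e^{a^\gamma 2^{j\gamma}}$ that already appears in the norm estimate \eqref{e:counterexample_decay_norm}). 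So the same endpoint/Laplace-type analysis used in Lemma \ref{l:counterexample_decay_norm} applies, the key difference being the extra oscillatory factor $e^{iux}$ and the fact that here we integrate $\widehat\psi(x)$ rather than $|\widehat\psi(x)|^2$.

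First I would exploit the arithmetic choice of $u$. Because $u = \lfloor 2^{j\gamma}\rfloor\frac{2\pi}{a}$ is (essentially) an integer multiple of $\frac{2\pi}{a}$, the oscillation $e^{iux}$ is nearly phase-aligned at the two endpoints $x=\pm a$: one has $e^{\pm i u a} = e^{\pm 2\pi i \lfloor 2^{j\gamma}\rfloor} = 1$. This is precisely why this frequency is chosen — it prevents cancellation between the two endpoint contributions, so that the endpoint masses add constructively rather than destructively. I would then use the even-symmetry \eqref{e:even_wave} of $\widehat\psi$ together with $g(-x)=g(x)$ from \eqref{e:counterexample_decay_def_g} to fold the integral onto $[0,a]$, writing $f_j(u) = \frac{1}{\pi}\int_0^a \cos(ux)\,e^{2^{j\gamma}x^\gamma}(a-x)^r g(x)\,dx$. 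Near $x=a$ the weight is $(a-x)^r g(a-)$ by \eqref{e:counterexample_decay_def_g}–\eqref{e:psihat_at_boundary}, so this integrand carries \emph{one} power $(a-x)^r$, whereas the norm integrand in \eqref{e:normsq_fj} carries $(a-x)^{2r}$.

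Next I would perform the change of variables $2^{j\gamma}(a^\gamma - x^\gamma) = y$ exactly as in the proof of Lemma \ref{l:counterexample_decay_norm}, localize to a neighborhood $y\in[0,\delta_0]$ of the endpoint, and apply dominated convergence. The factor $(a-x)^r = \bigl(\,y/2^{j\gamma}\,\bigr)^{r}\bigl|\tfrac{a-(a^\gamma-y/2^{j\gamma})^{1/\gamma}}{-y/2^{j\gamma}}\bigr|^{r}$ contributes the order $(2^{j\gamma})^{-r}$ (compared with $(2^{j\gamma})^{-2r}$ for the norm), the Jacobian and measure contribute $2^{-j\gamma}$, and the oscillatory factor $\cos(ux)\to 1$ near the endpoint (since $ux\to ua=2\pi\lfloor 2^{j\gamma}\rfloor$ and the relevant $x$-window shrinks). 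The leading constant is then a convergent integral $\int_0^\infty e^{-y}y^{r}\,dy$-type expression, strictly positive and $j$-independent. Collecting the prefactor $e^{a^\gamma 2^{j\gamma}}$ from the peak together with the algebraic order $2^{-j\gamma}\cdot 2^{-j\gamma r} = 2^{-j\gamma(1+r)}$ yields \eqref{e:counterexample_fj_over_a_particular_seq}.

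\textbf{The main obstacle} I anticipate is controlling the oscillatory integral carefully enough to be sure the endpoint contribution does not cancel and that the bulk region $y\in[\delta_0, a^\gamma 2^{j\gamma}]$ is genuinely negligible. Unlike the norm computation, where the integrand is nonnegative, here $\cos(ux)$ changes sign, so I must argue that away from the endpoint the rapid oscillation (at frequency $u\sim 2^{j\gamma}\frac{2\pi}{a}$) combined with the subexponential weight produces cancellation that leaves those regions of strictly smaller order than $e^{a^\gamma 2^{j\gamma}}2^{-j\gamma(1+r)}$. The cleanest route is to justify, via the smoothness of $\widehat\psi$ on the open interval and a stationary-phase/integration-by-parts estimate, that only the $O(2^{-j\gamma})$-width window around $x=a$ matters, and that within it $\cos(ux)$ is asymptotically constant equal to $1$ by the arithmetic alignment of $u$. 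Once this localization is secured, the remaining computation is a routine dominated-convergence limit mirroring Lemma \ref{l:counterexample_decay_norm}.
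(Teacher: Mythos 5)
Your route is structurally the paper's: fold the integral onto $[0,a]$ using \eqref{e:even_wave}, factor out the peak value $e^{a^\gamma 2^{j\gamma}}$, substitute $y=2^{j\gamma}(a^\gamma-x^\gamma)$, use the arithmetic choice of $u$ so that $e^{iua}=1$, and pass to the limit by dominated convergence as in Lemma \ref{l:counterexample_decay_norm}; your power count ($(a-x)^r$ here versus $(a-x)^{2r}$ in the norm, giving $2^{-j\gamma(1+r)}$ versus $2^{-j\gamma(1+2r)/2}$ after the square root) is also correct. Your anticipated ``main obstacle'' about the bulk region is, however, a non-issue: as in the norm lemma, the crude bound $|e^{iux}|\le 1$ together with the $e^{-y}$ factor already makes $y\in[\delta_0,a^\gamma2^{j\gamma}]$ negligible, and no stationary-phase argument is needed.

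The genuine gap is in your treatment of the residual oscillation inside the boundary layer. You assert that $\cos(ux)\to 1$ there because $ua\in 2\pi\bbZ$ and the $x$-window shrinks. But the window has width $a-x=O(2^{-j\gamma})$ while $u\sim 2^{j\gamma}\,2\pi/a$, so the phase $u(a-x)$ sweeps an interval of length $O(1)$: after the change of variables one finds $u\bigl(a-(a^\gamma-y/2^{j\gamma})^{1/\gamma}\bigr)\to 2\pi y/(\gamma a^\gamma)=:\beta y$ for each fixed $y$, which is not $0$. The dominated-convergence limit of the boundary-layer integral is therefore not a multiple of $\int_0^\infty e^{-y}y^r\,dy$ but of $\Re\int_0^\infty e^{-(1+i\beta)y}y^r\,dy=\Gamma(r+1)(1+\beta^2)^{-(r+1)/2}\cos\bigl((r+1)\arctan\beta\bigr)$, which is not automatically positive and can even vanish for unlucky combinations of $a$, $\gamma$, $r$. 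So your stated reason for the constant $C$ being strictly positive fails, and this nonvanishing must be checked separately. (The paper's own proof is terse on exactly this point, merely invoking ``a similar procedure'' to the norm computation, where the integrand is nonnegative and the issue does not arise; but your proposal makes the specific, and false, claim that the oscillation is asymptotically constant equal to $1$.)
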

\begin{proof}
First, we rewrite the expression for $f_j(u)$ in a more convenient way. For $u > 0$,
$$
f_j(u) = \frac{1}{2\pi} \int_{\bbR} e^{iux} e^{2^{j\gamma}|x|^\gamma} \widehat{\psi}(x) dx = \frac{1}{\pi} \Re \Big(\int^{a}_0 e^{iux} e^{ 2^{j\gamma}|x|^\gamma} \widehat{\psi}(x) dx \Big)
$$
$$
= \frac{1}{\pi}  e^{a^\gamma 2^{j\gamma}} \Re\Big( \int^{a}_0 e^{iux} e^{- 2^{j \gamma}(a^\gamma - x^\gamma)} \widehat{\psi}(x)dx \Big)
= \frac{1}{\pi}  e^{a^\gamma 2^{j\gamma}}\Re\Big(e^{iua} \int^{a}_0 e^{-i u (a-x)} e^{-2^{j\gamma} (a^\gamma - x^\gamma)}(a-x)^{r} g(x) dx \Big) .
$$
By the same change of variables as in \eqref{e:norm_f_after_changevar}, $f_j(u)$ becomes
$$
\frac{1}{\pi}  e^{a^\gamma 2^{j\gamma}} \Re \Big(e^{iua} \int^{a^\gamma 2^{j \gamma}}_0 e^{-iu(a - ( a^\gamma - y/2^{j\gamma})^{1/\gamma})} e^{-y} \Big( a - \Big( a^\gamma - \frac{y}{2^{j\gamma}}\Big)^{1/\gamma}\Big)^{r}
$$
\begin{equation}\label{e:asymptotics_fj(u)}
g \Big( \Big( a^\gamma - \frac{y}{2^{j\gamma}}\Big)^{1/\gamma}\Big)
\Big( a^\gamma - \frac{y}{2^{j\gamma}}\Big)^{1/\gamma -1 } \Big( -\frac{1}{\gamma}\Big) 2^{-j \gamma} dy\Big).
\end{equation}
Note that, over the sequence
\begin{equation}\label{e:counterexample_special_sequence}
u = \Big\{ \lfloor 2^{j\gamma} \rfloor \frac{2\pi}{a} \Big\}_{j \in \bbN},
\end{equation}
the exponential term $e^{iua}$ in \eqref{e:asymptotics_fj(u)} is identically 1. Thus, for $u$ as in \eqref{e:counterexample_special_sequence} and by applying a similar procedure to the one that leads to (\ref{e:counterexample_decay_norm}), we obtain \eqref{e:counterexample_fj_over_a_particular_seq}. $\Box$\\
\end{proof}

Lemmas \ref{l:counterexample_decay_norm} and \ref{l:counterexample_fj_over_a_particular_seq} yield the following result.
\begin{corollary} Let $f_j$ be defined by \eqref{e:fj}. Then, for some $C > 0$,
\begin{equation}\label{e:decay_fj(2jr2pi)/norm_fj}
\frac{f_{j}(\lfloor 2^{j\gamma} \rfloor 2\pi/a)}{\norm{f_j}} \sim C 2^{-j \gamma/2}.
\end{equation}
\end{corollary}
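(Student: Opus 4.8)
The plan is to obtain \eqref{e:decay_fj(2jr2pi)/norm_fj} by dividing the two asymptotic equivalences already established in Lemmas \ref{l:counterexample_decay_norm} and \ref{l:counterexample_fj_over_a_particular_seq}. I would invoke the elementary fact that if $a_j \sim b_j$ and $c_j \sim d_j$ with $d_j \neq 0$ for large $j$, then $a_j/c_j \sim b_j/d_j$, applied with $a_j = f_j(\lfloor 2^{j\gamma}\rfloor 2\pi/a)$, $c_j = \norm{f_j}$, and $b_j, d_j$ the right-hand sides of \eqref{e:counterexample_fj_over_a_particular_seq} and \eqref{e:counterexample_decay_norm}, respectively. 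The denominator $C e^{a^\gamma 2^{j\gamma}} 2^{-j\gamma(1+2r)/2}$ is manifestly nonzero, so the quotient asymptotic is legitimate.

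First I would write out the quotient of the two right-hand sides. The decisive feature is that the exponential factor $e^{a^\gamma 2^{j\gamma}}$ occurs in both numerator and denominator and therefore cancels exactly, leaving only a ratio of powers of $2$. The remaining work is exponent arithmetic: the numerator contributes $2^{-j\gamma(1+r)}$ and the denominator $2^{-j\gamma(1+2r)/2}$, so the surviving exponent is
$$
-\gamma(1+r)j + \frac{\gamma(1+2r)}{2}j = \frac{\gamma j}{2}\bigl[-2(1+r) + (1+2r)\bigr] = -\frac{\gamma j}{2}.
$$
Collecting the (nonzero) multiplicative constants from the two lemmas into a single $C > 0$ then yields \eqref{e:decay_fj(2jr2pi)/norm_fj}.

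There is essentially no obstacle at this stage: all of the analytic work — the dominated-convergence estimates and the change of variables $2^{j\gamma}(a^\gamma - x^\gamma) = y$ — has already been carried out inside the two lemmas, and what remains is a routine cancellation. The only point deserving a word of care is the strict positivity of the constants, so that the quotient is a genuine asymptotic and not an indeterminate form; this is guaranteed by $g(a-) \neq 0$ in \eqref{e:counterexample_decay_def_g} together with the strict positivity of $\int_0^\infty e^{-2y} y^{2r}\, dy$ noted in the proof of Lemma \ref{l:counterexample_decay_norm}.

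Finally, although the statement records only the asymptotic \eqref{e:decay_fj(2jr2pi)/norm_fj}, I would emphasize its intended consequence, which is the actual purpose of this section. Since $u = \lfloor 2^{j\gamma}\rfloor 2\pi/a$ grows like $2^{j\gamma}$, any vaguelet bound of the form \eqref{e:bound_fj_to_be_disproved} would force the left-hand side to decay at least as fast as $(1+|u|)^{-1-\alpha} \asymp 2^{-j\gamma(1+\alpha)}$. This is incompatible with the true decay rate $2^{-j\gamma/2}$ for any $\alpha > 0$ and all large $j$, which is precisely how the corollary refutes \eqref{e:Meyer_Coif_expression_5.1} for the exponential-type choice \eqref{e:counterexample_exp_kernel} and thereby demonstrates the necessity of the quasi-homogeneity assumption \eqref{e:bounds-h_2}.
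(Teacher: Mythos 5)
Your proposal is correct and coincides with the paper's (implicit) argument: the corollary is stated there as an immediate consequence of Lemmas \ref{l:counterexample_decay_norm} and \ref{l:counterexample_fj_over_a_particular_seq}, obtained exactly by dividing the two asymptotic equivalences, and your exponent arithmetic $-\gamma(1+r)j + \tfrac{\gamma(1+2r)}{2}j = -\tfrac{\gamma j}{2}$ is right. The closing remark on how this refutes \eqref{e:bound_fj_to_be_disproved} likewise matches the discussion the paper gives immediately after the corollary.
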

For $u$ as in \eqref{e:counterexample_special_sequence}, the right-hand side of \eqref{e:bound_fj_to_be_disproved} is asymptotically equivalent to $2^{-j\gamma(1 + \alpha)}$. This contradicts \eqref{e:decay_fj(2jr2pi)/norm_fj}, thus showing that \eqref{e:bound_fj_to_be_disproved}  does not hold. Thus, the functions in the family $\Psi^{j,k}$ do not satisfy \eqref{e:Meyer_Coif_expression_5.1} and are not vaguelets.


%
%

\section{The Riesz property}
\label{s:riesz}

This section concerns the Riesz property of the families of functions \eqref{e:new-func-transform-normalized} and \eqref{e:biorth-new-func-transform-normalized}. The Riesz property is proved in Section \ref{s:proof-riesz} under the assumptions of Theorem \ref{t:main}. The connection to the available results on the Riesz property of orthogonal wavelet bases in Sobolev spaces is discussed in Section \ref{s:Sobolev}.

\subsection{Proof of Theorem \ref{t:main}}
\label{s:proof-riesz}

By Theorem 2 on p.\ 56 of Meyer and Coifman \cite{coifman:meyer:1997}, an immediate consequence of Theorem \ref{t:main-vaguelet} is the following result which we state for later reference, and which is related to property $(i)$ of Riesz bases (see (\ref{e:Riesz_basis_equation_on_bounds})).

\begin{proposition}\label{p:ineq_vaguelet}
Under Assumptions $(h2)$, $(W1)$, $(W2)$ and $(W4)$, there exists a constant $C > 0$ such that for all sequence $\{d_{j,k}\}_{j \geq 0, k \in \bbZ} \in l^2$,
\begin{equation}\label{e:ineq_vaguelet}
\max \Big\{\Big\|\sum^{\infty}_{j=0} \sum_{k \in \bbZ} d_{j,k} \Psi_{j,k}\Big\|,\Big\|\sum^{\infty}_{j=0} \sum_{k \in \bbZ} d_{j,k} \Psi^{j,k}\Big\| \Big\}\leq C \Big(\sum^{\infty}_{j=0} \sum_{k \in \bbZ} |d_{j,k}|^2\Big)^{1/2}.
\end{equation}
\end{proposition}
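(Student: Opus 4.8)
The plan is to obtain \eqref{e:ineq_vaguelet} as a direct consequence of the vaguelet property already established in Theorem \ref{t:main-vaguelet}, combined with the standard almost-orthogonality bound \eqref{e:bounds_vaguelets} for vaguelets (Theorem 2, p.\ 56 of Coifman and Meyer \cite{coifman:meyer:1997}). Since Theorem \ref{t:main-vaguelet} asserts that, under assumptions $(h2)$, $(W1)$, $(W2)$ and $(W4)$, both $\{\Psi_{j,k}, j \geq 0, k \in \bbZ\}$ and $\{\Psi^{j,k}, j \geq 0, k \in \bbZ\}$ are vaguelets, the first step is simply to record that the defining conditions \eqref{e:Meyer_Coif_expression_5.1}, \eqref{e:Meyer_Coif_expression_5.2} and \eqref{e:Meyer_Coif_expression_5.3} hold with exponents $0<\alpha_2<\alpha_1<1$ and a single constant, none of which depend on $j$ or $k$ (these were furnished by Propositions \ref{p:Lip_vaguelet_general} and \ref{p:decay_vaguelet_general}).

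Concretely, I would first apply \eqref{e:bounds_vaguelets} to the family $\{\Psi_{j,k}\}$ to get $\big\|\sum_{j,k} d_{j,k}\Psi_{j,k}\big\|^2 \leq C \sum_{j,k} d_{j,k}^2$, then take square roots to control the first term inside the maximum in \eqref{e:ineq_vaguelet}. Repeating the identical argument for the second vaguelet family $\{\Psi^{j,k}\}$ bounds the second term, and taking the larger of the two constants produces a single $C>0$ valid for the maximum. Because the coefficient sequence lies in $l^2$, the bound first holds for finite partial sums and then passes to the limit, so that the infinite series converge in $L^2(\bbR)$ and the estimate is preserved.

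I do not expect a genuine obstacle here, since the inequality \eqref{e:bounds_vaguelets} depends only on the structural constant and the exponents $\alpha_1>\alpha_2$, all of which are already available with uniformity in $j,k$. The only point deserving a word of care, rather than any calculation, is precisely this uniformity of the constant: all of the analytic work was carried out in proving Theorem \ref{t:main-vaguelet}, and the present statement follows by quoting the general vaguelet bound. Thus the proof reduces to invoking Theorem \ref{t:main-vaguelet} together with the cited result of Coifman and Meyer \cite{coifman:meyer:1997}.
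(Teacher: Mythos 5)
Your proposal matches the paper's own argument: the paper states Proposition \ref{p:ineq_vaguelet} as an immediate consequence of Theorem \ref{t:main-vaguelet} via Theorem 2 on p.\ 56 of Coifman and Meyer \cite{coifman:meyer:1997}, which is exactly the bound \eqref{e:bounds_vaguelets} you invoke for each of the two vaguelet families. Your added remarks on the uniformity of the constants and on passing from finite partial sums to the $l^2$ limit are correct and consistent with what the paper leaves implicit.
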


We first establish the denseness property $(ii)$ of Riesz bases (see below (\ref{e:Riesz_basis_equation_on_bounds})). To prove that the spaces generated by \eqref{e:new-func-transform-normalized} and \eqref{e:biorth-new-func-transform-normalized} are dense in $L^2(\bbR)$, we will need the following lemma.

\begin{lemma} \label{l:Phij_and_etaj_to_Phij+1}

Suppose that the assumptions $(h1)$, $(h2)$, $(h3)$, $(W1)$, $(W2)$, $(W4)$ hold. For $j \in \bbZ$ and $\{a_k\}_{k \in \bbZ}$, $\{b_k\}_{k \in \bbZ}$ $\in l^2(\bbZ)$, there exist unique sequences $\{c_k\}_{k \in \bbZ}, \{\widetilde c_k\}_{k \in \bbZ} \in l^2(\bbZ)$ such that
\begin{equation}
\sum_k a_k \Phi_j(t - 2^{-j}k) + \sum_k b_k \eta_j(t - 2^{-j}k) =
\sum_k c_k \Phi_{j+1}(t - 2^{-(j+1)}k),
\label{e:Phij_and_etaj_to_Phij+1}
\end{equation}
\begin{equation}
\sum_k a_k \Phi^j(t - 2^{-j}k) + \sum_k b_k \eta^j(t - 2^{-j}k) =
\sum_k \widetilde c_k \Phi^{j+1}(t - 2^{-(j+1)}k).
\label{e:Phij_and_etaj_to_Phij+1_biorthog}
\end{equation}
Moreover, the respective induced maps from $l^2(\bbZ) \times l^2(\bbZ)$ to
$l^2(\bbZ)$ are isomorphisms.
\end{lemma}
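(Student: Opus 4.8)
The plan is to transfer both identities to the Fourier domain, where each lattice sum becomes a $2\pi$-periodic symbol times an elementary transform, and then to reduce the whole statement to the invertibility of a $2\times 2$ ``modulation matrix''. Write $A(\xi)=\sum_k a_k e^{-ik\xi}$, $B(\xi)=\sum_k b_k e^{-ik\xi}$, $C(\xi)=\sum_k c_k e^{-ik\xi}$ for the symbols in $L^2[-\pi,\pi]$, and set $P_j=\norm{\Phi^{\#}_{j,0}}$, $Q_j=\norm{\Psi^{\#}_{j,0}}$, which are finite and positive by Lemma \ref{l:bases_are_welldefined} and Proposition \ref{p:decay_norm}. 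Since the Fourier transform of $\sum_k a_k\Phi_j(\cdot-2^{-j}k)$ is $A(2^{-j}x)\widehat{\Phi}_j(x)$ (and similarly for the other sums, all of which converge in $L^2$ because the shift families are Bessel), and since $\widehat{\Phi}^{\#}_{j,0}(x)=h_1(x)2^{-j/2}\widehat{\phi}(2^{-j}x)$, $\widehat{\Psi}^{\#}_{j,0}(x)=h_2(x)2^{-j/2}\widehat{\psi}(2^{-j}x)$ by \eqref{e:Phi_j_Phi^j_eta_j_eta^j}, identity \eqref{e:Phij_and_etaj_to_Phij+1} is equivalent to a pointwise Fourier-domain equation among these expressions.

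First I would insert the refinement relations. From \eqref{e:phi_across_scales}, $\widehat{\phi}(2^{-j}x)=\tfrac{1}{\sqrt2}\widehat{u}(2^{-(j+1)}x)\widehat{\phi}(2^{-(j+1)}x)$, and likewise $\widehat{\psi}(2^{-j}x)=\tfrac{1}{\sqrt2}\widehat{v}(2^{-(j+1)}x)\widehat{\phi}(2^{-(j+1)}x)$ with $\widehat{v}(\xi)=e^{-i\xi}\overline{\widehat{u}(\xi+\pi)}$. Because $2^{-j/2}/\sqrt2=2^{-(j+1)/2}$, the common factor $h_1(x)2^{-(j+1)/2}\widehat{\phi}(2^{-(j+1)}x)$ can be pulled out of every term. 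Writing $y=2^{-(j+1)}x$ and using Assumption $(h3)$, \eqref{e:assumption_h2/h1_is_periodic}, which makes $\theta:=h_2/h_1$ a $2\pi$-periodic function, the identity reduces to the symbol equation
$$\frac{C(y)}{P_{j+1}}=\frac{A(2y)}{P_j}\,\widehat{u}(y)+\theta(2^{j+1}y)\,\frac{B(2y)}{Q_j}\,\widehat{v}(y).$$
For $j\ge 0$ the multiplier $\theta(2^{j+1}y)$ is $\pi$-periodic in $y$ (as $2^{j+1}\pi$ is an integer multiple of $2\pi$), so the right-hand side is a bona fide $2\pi$-periodic function of $y$; this legitimately defines $C$, hence $\{c_k\}\in l^2(\bbZ)$ as its Fourier coefficients, and multiplying back by the common factor recovers \eqref{e:Phij_and_etaj_to_Phij+1}. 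This is exactly the step where the periodicity hypothesis $(h3)$ is indispensable.

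To obtain uniqueness and the isomorphism property, I would evaluate the symbol equation at $y$ and at $y+\pi$, using that $A(2\cdot)$, $B(2\cdot)$ and $\theta(2^{j+1}\cdot)$ are $\pi$-periodic, to get the linear system
$$\frac{1}{P_{j+1}}\begin{pmatrix}C(y)\\ C(y+\pi)\end{pmatrix}=\mathcal M_j(y)\begin{pmatrix}A(2y)\\ B(2y)\end{pmatrix},\qquad \mathcal M_j(y)=\begin{pmatrix}\widehat{u}(y)/P_j & \theta(2^{j+1}y)\widehat{v}(y)/Q_j\\ \widehat{u}(y+\pi)/P_j & \theta(2^{j+1}y)\widehat{v}(y+\pi)/Q_j\end{pmatrix}.$$
A direct computation with $\widehat{v}(\xi)=e^{-i\xi}\overline{\widehat{u}(\xi+\pi)}$ and the quadrature identity \eqref{e:sq_u^(x)+sq_u^(x+2pi)=2} gives $\widehat{u}(y)\widehat{v}(y+\pi)-\widehat{v}(y)\widehat{u}(y+\pi)=-2e^{-iy}$, of constant modulus $2$, so $\det\mathcal M_j(y)=-2e^{-iy}\theta(2^{j+1}y)/(P_jQ_j)$. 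By $(h1)$, $(h2)$ and $(h3)$ both $\theta$ and $\theta^{-1}$ are bounded on a period, whence $\abs{\theta(2^{j+1}y)}$ is bounded above and below uniformly in $y$; as $P_j,Q_j,P_{j+1}$ are fixed positive constants at the fixed scale $j$, the entries of $\mathcal M_j$ are uniformly bounded and $\abs{\det\mathcal M_j(y)}\ge c_j>0$ uniformly in $y$. Thus $\mathcal M_j$ and $\mathcal M_j^{-1}$ are uniformly bounded matrix-valued periodic symbols, and by Plancherel on the torus the induced map $(\{a_k\},\{b_k\})\mapsto\{c_k\}$ is a bounded, boundedly invertible operator from $l^2(\bbZ)\times l^2(\bbZ)$ onto $l^2(\bbZ)$, i.e.\ an isomorphism.

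The biorthogonal identity \eqref{e:Phij_and_etaj_to_Phij+1_biorthog} is treated identically after replacing $h_1,h_2$ by $\overline{h_1^{-1}},\overline{h_2^{-1}}$ as in \eqref{e:biorth-func-transform}; the only change is that $\theta$ is replaced by $\overline{h_2^{-1}}/\overline{h_1^{-1}}=\overline{h_1/h_2}=\overline{\theta^{-1}}$, again $2\pi$-periodic and bounded above and below, so the same determinant argument yields the claim. The step I expect to be most delicate is the matching of periodicities in the symbol equation: $A(2\cdot)$ and $B(2\cdot)$ sit at period $\pi$ while $\widehat{u},\widehat{v},C$ sit at period $2\pi$, and one must verify that the multiplier $\theta(2^{j+1}\cdot)$ is compatible (i.e.\ $\pi$-periodic) with this polyphase structure. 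This compatibility is precisely what $(h3)$ furnishes for $j\ge 0$, and it is also the structural reason the periodicity assumption cannot be dropped: without it the function defining $C$ would fail to be $2\pi$-periodic in $y$ and the two lattice sums could not be identified. The remaining points---finiteness of $P_j,Q_j$ and the Bessel bounds giving $L^2$-convergence of the lattice sums---are routine consequences of Lemma \ref{l:bases_are_welldefined} and Proposition \ref{p:decay_norm}.
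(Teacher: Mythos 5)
Your proof is correct and follows essentially the same route as the paper's: pass to the Fourier domain, use the refinement relations to factor out $\widehat{\Phi}_{j+1}$, and invert the $2\times 2$ modulation matrix whose determinant is $-2e^{-ix}$ times the $2\pi$-periodic, boundedly invertible factor $h_2/h_1$ (with the normalizations coming from Proposition \ref{p:decay_norm}). The only differences are presentational: you make explicit the periodicity compatibility for $j\ge 0$ and the Plancherel step yielding the $l^2$-isomorphism, both of which the paper leaves implicit.
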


\begin{proof} By Lemma \ref{l:bases_are_welldefined}, all the functions are well-defined in $L^2(\bbR)$.

We first show \eqref{e:Phij_and_etaj_to_Phij+1}. For this purpose, we adapt the proof of Lemma 5.3 in Meyer et al.\ \cite{MST:1999}. In terms of Fourier transforms,
(\ref{e:Phij_and_etaj_to_Phij+1}) may be expressed as
\begin{equation}
\widehat{a}\Big(\frac{x}{2^j}\Big) \widehat{\Phi}_j(x) +
\widehat{b}\Big(\frac{x}{2^j}\Big) \widehat{\eta}_j(x) =
\widehat{c}\Big(\frac{x}{2^{j+1}}\Big) \widehat{\Phi}_{j+1}(x),
\label{e:Phij_and_etaj_to_Phij+1_in_terms_of_FT}
\end{equation}
where $\widehat{a}$, $\widehat{b}$ and $\widehat{c}$ are,
respectively, the $2\pi$-periodic extensions of the discrete Fourier
transforms of $\{a_k\}$, $\{b_k\}$ and $\{c_k\}$. Define the functions
$$
U_j(x) = \frac{\|\Phi^{\#}_{j+1,0}\|}{\|\Phi^{\#}_{j,0}\|} \widehat{u}(2^{-(j+1)}x), \quad
V_j(x) =
 \frac{\|\Phi^{\#}_{j+1,0}\|}{\|\Psi^{\#}_{j,0}\|} \frac{h_2(x)}{h_1(x)} \widehat{v}(2^{-(j+1)}x),
$$
$x \in [-\pi,\pi)$, with the CMFs $u$ and $v$. Then
$$
\widehat{\Phi}_j(x) = U_j(x) \widehat{\Phi}_{j+1}(x), \quad
\widehat{\eta}_j(x) = V_j(x) \widehat{\Phi}_{j+1}(x).
$$
Therefore, the relation
(\ref{e:Phij_and_etaj_to_Phij+1_in_terms_of_FT}) may be rewritten as
$$
\widehat{a}(2x) U_j(2^{j+1}x) + \widehat{b}(2x) V_j(2^{j+1}x) =
\widehat{c}(x),
$$
which implies that $\widehat{c}$ can be obtained from $\widehat{a}$
and $\widehat{b}$. Moreover, by \eqref{e:assumption_h1_h1inv_bounded_compactinterv}, \eqref{e:h2,h2inv_in_C2}, \eqref{e:assumption_h2/h1_is_periodic}, $U_j(2^{j+1}x)$ and $V_j(2^{j+1}x)$ are $L^2[-\pi,\pi)$ functions, and
thus so is $\widehat{c}$.

Conversely, consider the family of matrices $\{M(x)\}_{x \in
[-\pi,\pi)}$, where
\[ M(x) = \begin{pmatrix}
   U_j(2^{j+1}x)     &     V_j(2^{j+1}x)        \\
                      &                          \\
 U_j(2^{j+1}(x+\pi))  &     V_j(2^{j+1}(x+\pi))
\end{pmatrix}. \]
These matrices are invertible, since
\begin{eqnarray}
\textnormal{det} [M(x)] & = & \frac{\|\Phi^{\#}_{j+1,0}\|}{\|\Phi^{\#}_{j,0}\|} \frac{{\|\Phi^{\#}_{j+1,0}\|} }{\|\Psi^{\#}_{j,0}\|} \hspace{0.1cm} \hspace{0.04cm}
\frac{h_2(2^{j+1}x)}{h_1(2^{j+1}x)} \hspace{0.04cm}
(\widehat{u}(x)\widehat{v}(x+\pi) - \widehat{u}(x+\pi)
\widehat{v}(x))\\ & = & \frac{\|\Phi^{\#}_{j+1,0}\|}{\|\Phi^{\#}_{j,0}\|} \frac{{\|\Phi^{\#}_{j+1,0}\|} }{\|\Psi^{\#}_{j,0}\|} \hspace{0.1cm} \frac{h_2(2^{j+1}x)}{h_1(2^{j+1}x)}
(-2e^{-ix}), \quad x\in [-\pi,\pi),
\end{eqnarray}
is bounded away from zero by \eqref{e:assumption_h1_h1inv_bounded_compactinterv} and \eqref{e:h2,h2inv_in_C2}. Thus,
$\widehat{a}$ and $\widehat{b}$ can also be recovered from
$\widehat{c}$ by using
$$
\begin{pmatrix}
   \widehat{a}(2x)      \\
  \widehat{b}(2x)
\end{pmatrix}
=
\begin{pmatrix}
   \widehat{a}(2(x+\pi))    \\
  \widehat{b}(2(x+\pi))
\end{pmatrix}
= M(x)^{-1}
\begin{pmatrix}
 \widehat{c}(x)     \\
\widehat{c}(x + \pi)
\end{pmatrix}.
$$
Moreover, since, by assumption, $\widehat{c} \in L^2[-\pi,\pi)$ and
$$
\widehat{a}(x) = \frac{\|\Phi^{\#}_{j+1,0}\|}{\| \Psi^{\#}_{j,0}\|} \frac{h_2}{h_1}(2^{j}x) \Big[ \widehat{v}\Big(\frac{x}{2}+\pi \Big) \widehat{c}\Big(\frac{x}{2}\Big) - \widehat{v}\Big(\frac{x}{2}\Big)
\widehat{c}\Big( \frac{x}{2}+ \pi \Big)\Big]\det\Big[M\Big( \frac{x}{2}\Big) \Big]^{-1},
$$
$$
\widehat{b}(x) = \frac{\|\Phi^{\#}_{j+1,0}\|}{\| \Phi^{\#}_{j,0}\|}  \Big[\widehat{u}\Big(\frac{x}{2}\Big)\widehat{c}\Big( \frac{x}{2}+ \pi\Big) - \widehat{u}\Big(\frac{x}{2} + \pi \Big)\widehat{c}\Big( \frac{x}{2} \Big)\Big]\det\Big[M\Big( \frac{x}{2}\Big) \Big]^{-1},
$$
%
%
then $\widehat{a}, \widehat{b}\in L^2[-\pi,\pi)$. To show \eqref{e:Phij_and_etaj_to_Phij+1_biorthog}, a similar argument can be developed with $\Phi^{j,0}_{\#}$, $\Psi^{j,0}_{\#}$, $U^j(x)$, $V^j(x)$, $\overline{h_2(x)^{-1}}$ and $\overline{h_1(x)^{-1}}$, instead. $\Box$\\
\end{proof}

The next proposition shows that the families \eqref{e:new-func-transform-normalized} and \eqref{e:biorth-new-func-transform-normalized} are, each one, dense in $L^2(\bbR)$. Its proof is different from that in Meyer et al.\ \cite{MST:1999} (see Theorem 1, p.\ 481) because we do not assume that the Fourier domain support of $\widehat{\psi}$ is compact. 

\begin{proposition}\label{p:denseness}
Under the assumptions $(h1)$, $(h2)$, $(h3)$, $(W1)$, $(W2)$, $(W4)$, the spans of the families \eqref{e:new-func-transform-normalized}, \eqref{e:biorth-new-func-transform-normalized} are dense in $L^2(\bbR)$.
\end{proposition}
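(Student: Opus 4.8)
The plan is to reduce the statement to the single assertion $\overline{\bigcup_{J\ge 0} V_J}=L^2(\bbR)$, with $V_J$ as in \eqref{e:Vj_Wj}, and then to prove this by directly approximating a dense class of functions — the point being that $\widehat\phi$ need not have compact Fourier support.

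\emph{Reduction.} First I would iterate Lemma~\ref{l:Phij_and_etaj_to_Phij+1}. Since the map $(\{a_k\},\{b_k\})\mapsto\{c_k\}$ in \eqref{e:Phij_and_etaj_to_Phij+1} is onto, the set of functions $\sum_k a_k\Phi_j(\,\cdot\,-2^{-j}k)+\sum_k b_k\eta_j(\,\cdot\,-2^{-j}k)$ coincides with $\{\sum_k c_k\Phi_{j+1}(\,\cdot\,-2^{-(j+1)}k)\}$; taking closures and using $\overline{\overline{X}+Y}=\overline{X+Y}$ yields $\overline{V_j+W_j}=V_{j+1}$ for every $j\ge0$. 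By induction, the closed span of the truncated family $\{\Phi_0(\,\cdot\,-k)\}\cup\{\eta_j(\,\cdot\,-2^{-j}k):0\le j\le J-1\}$ equals $V_J$, so the closed span of \eqref{e:new-func-transform-normalized} is $\overline{\bigcup_{J}V_J}$. Iterating \eqref{e:Phij_and_etaj_to_Phij+1_biorthog} in the same way reduces the denseness of \eqref{e:biorth-new-func-transform-normalized} to $\overline{\bigcup_J V^J}=L^2(\bbR)$, where $V^J=\overline{\textnormal{span}}\{\Phi^J(\,\cdot\,-2^{-J}k)\}$.

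\emph{Approximation.} Fix $f$ with $\widehat f$ continuous and supported in $(-R,R)$; such $f$ are dense. From \eqref{e:Phi_j_Phi^j_eta_j_eta^j} and \eqref{e:Phi_jk}, $\widehat\Phi_J(x)=c_J\,h_1(x)\,2^{-J/2}\widehat\phi(2^{-J}x)$ with $c_J=1/\norm{\Phi^\#_{J,0}}$. For $J$ large enough that $R<2^J\pi$ I would define the $2\pi$-periodic symbol $\widehat c^{(J)}$ on $[-\pi,\pi)$ by $\widehat c^{(J)}(y)=\widehat f(2^Jy)\big/\big(c_J2^{-J/2}h_1(2^Jy)\widehat\phi(y)\big)$. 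On the support of $\widehat f(2^J\,\cdot\,)$ the argument $2^Jy$ stays in the fixed compact $[-R,R]$, where $h_1,h_1^{-1}$ are bounded by \eqref{e:assumption_h1_h1inv_bounded_compactinterv} and $\widehat\phi$ is bounded away from $0$ (since $\widehat\phi(0)\neq0$ and $\widehat\phi$ is continuous, cf. \eqref{e:assumption_nontrivial_compact_set_in_support_phi}); hence $\{c^{(J)}_k\}\in l^2(\bbZ)$. The function $P_Jf:=\sum_k c^{(J)}_k\Phi_J(\,\cdot\,-2^{-J}k)\in V_J$ then has $\widehat{P_Jf}(x)=\widehat c^{(J)}(2^{-J}x)\widehat\Phi_J(x)$, which by construction equals $\widehat f$ on $(-R,R)$, while the periodicity of $\widehat c^{(J)}$ produces spurious contributions (``ghosts'') on the disjoint windows $x\in(2^{J+1}\pi l-R,\,2^{J+1}\pi l+R)$, $l\neq0$, where one computes $\widehat{P_Jf}(2^{J+1}\pi l+\tau)=\frac{\widehat f(\tau)}{h_1(\tau)\widehat\phi(2^{-J}\tau)}\,h_1(2^{J+1}\pi l+\tau)\,\widehat\phi(2\pi l+2^{-J}\tau)$.

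\emph{The main obstacle} is to show that the total ghost mass $\sum_{l\neq0}\int_{-R}^{R}\abs{\widehat{P_Jf}(2^{J+1}\pi l+\tau)}^2\,d\tau$ tends to $0$ as $J\to\infty$; this is precisely where the argument must depart from the compactly supported case of Meyer et al., in which only finitely many windows are nonempty. Two ingredients drive the estimate. First, $\widehat\phi$ is a scaling function, so $\widehat\phi(2\pi l)=\delta_{\{l=0\}}$; hence for $l\neq0$ the factor $\widehat\phi(2\pi l+2^{-J}\tau)$ vanishes as $J\to\infty$, killing each ghost window individually. Second, to sum over the infinitely many $l$ uniformly in $J$ one bounds $\abs{h_1(2^{J+1}\pi l+\tau)}\le C\,\abs{2^{J+1}\pi l+\tau}^{-d}$, using the quasi-homogeneous bound \eqref{e:bounds-h_2} of $(h2)$ together with the $2\pi$-periodicity and boundedness of $h_2/h_1$ from \eqref{e:assumption_h2/h1_is_periodic} (as in the proof of Lemma~\ref{l:bases_are_welldefined}), and then controls this growth by the decay \eqref{e:assumption_decay_wavelet_Fourier} of $\widehat\phi$ near each $2\pi l$ (supplemented by the smoothness and decay in $(W1)$ and $(W2)$ needed to exploit the vanishing $\widehat\phi(2\pi l)=0$). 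The resulting bound gives $\norm{P_Jf-f}\to0$, so $f\in\overline{\bigcup_J V_J}$ and $\overline{\bigcup_J V_J}=L^2(\bbR)$. The biorthogonal family is treated verbatim with $h_1$ replaced by $\overline{h_1^{-1}}$, whose quasi-homogeneous growth $\sim\abs{x}^{d}$ is again balanced against \eqref{e:assumption_decay_wavelet_Fourier}.
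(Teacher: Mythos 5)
Your reduction to $\overline{\bigcup_J V_J}=L^2(\bbR)$ via Lemma \ref{l:Phij_and_etaj_to_Phij+1} is sound, and your single-scale projection $P_Jf$ with the ``ghost'' bookkeeping is a genuinely different route from the paper's. The paper instead expands $g$ with $\widehat g=\widehat f/h_2$ (for $f\in L^2_b(\bbR)$) in the orthonormal basis $\{\phi(\cdot-k),\psi_{j,k}\}$, multiplies the expansion by $h_2$ in the Fourier domain, and controls convergence of the resulting multiscale series with the vaguelet bound of Proposition \ref{p:ineq_vaguelet} together with Proposition \ref{p:decay_norm} (and, for $d<0$, a Sobolev-coefficient estimate). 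That route never has to estimate anything at a single large scale $J$, so no ghost sum ever appears.

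The gap is in your ``main obstacle'' paragraph, and it is not merely a matter of filling in routine details: for $d<0$ the estimate you sketch does not close under the stated hypotheses. In the window indexed by $l\neq0$ you must control $|h_1(2^{J+1}\pi l+\tau)|\,|\widehat\phi(2\pi l+2^{-J}\tau)|$. Writing $h_1=(h_1/h_2)h_2$ and using \eqref{e:bounds-h_2} gives $|h_1(2^{J+1}\pi l+\tau)|\le C(2^{J}|l|)^{|d|}$, which grows in $J$. The only counterweights are the decay $|\widehat\phi(2\pi l+2^{-J}\tau)|\le C(1+|l|)^{-(|d|+1/2+\zeta)}$ from $(W4)$ --- which is independent of $J$ and is calibrated just barely to make the $l$-sum converge, leaving only the margin $\zeta$ --- and the vanishing $\widehat\phi(2\pi l+s)=O(|s|^{N})$, which supplies a factor $2^{-JN\theta}$ only at the price of giving up a fraction $\theta$ of the decay in $l$. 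Balancing the two requirements ($N\theta>|d|$ to beat the $2^{J|d|}$ growth, and $\theta<\zeta/(|d|+1/2+\zeta)$ to keep the $l$-sum finite) forces $N>|d|(|d|+1/2+\zeta)/\zeta$, which is not implied by \eqref{e:assumption_number_vanishing_moments} and can fail badly when $\zeta$ is small. So either you need substantially stronger hypotheses (a larger decay exponent in $(W4)$, or quantitative control of $\widehat\phi$ near the lattice $2\pi\bbZ$ uniformly in $l$), or a different mechanism for $d<0$ --- which is exactly what the paper supplies by spreading the approximation over all scales and invoking the vaguelet inequality. For $d\ge0$ your argument does close ($h_1$ is then bounded, the $l$-sum converges by $(W4)$ alone, and dominated convergence with $\widehat\phi(2\pi l)=0$ kills each window), but the proposition must cover $d<0$ as well. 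A secondary, fixable omission: to define $P_Jf\in V_J$ from the bounded symbol $\widehat c^{(J)}$ and to identify $\widehat{P_Jf}$ you need the upper Riesz/Bessel bound $\sum_{l}|\widehat\Phi_J(x+2^{J+1}\pi l)|^2\le C$ at scale $J$, which follows from the argument of Lemma \ref{l:Phi0_gives_a_Riesz_basis_for_V0} but should be stated.
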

\begin{proof}
Let $g \in L^2(\bbR)$. Since $\phi$, $\psi$ generate an orthogonal basis of $L^2(\bbR)$, we can take the sequences $\{a_{0,k}\}_{k \in \bbZ},
\{d_{j,k}\}_{j \geq 0, k \in \bbZ} \in l^2(\bbZ)$ as the coefficients of $g$ in that basis. In the Fourier domain, we can express
\begin{equation} \label{e:expansion_g_in_L2_in_wave}
\widehat{g}(x) = \widehat{a}_0(x) \widehat{\phi}(x) + \sum^{\infty}_{j=0} \widehat{d}_j (2^{-j}x)2^{-j/2}\widehat{\psi}(2^{-j}x),
\end{equation}
where $\widehat{a}_0(x)$, $\widehat{d}_j (x) $ denote, respectively, the discrete Fourier transforms of $\{a_{0,k}\}_{k \in \bbZ}$,
$\{d_{j,k}\}_{k \in \bbZ}$.

We first consider the family \eqref{e:new-func-transform-normalized}. We now break up the proof into two cases.

First, assume that $d \geq 0$. By \eqref{e:h2,h2inv_in_C2} and \eqref{e:bounds-h_2}, $h_2$ is bounded. Let $\widehat{d}^{(d)}_j (2^{-j}x):= \widehat{d}_j (2^{-j}x) \|\Psi^{\#}_{j,0}\|$ and $d^{(d)}_{j,k} = d_{j,k} \| \Psi^{\#}_{j,0}\|$. Since $d \geq 0$, then by Proposition \ref{p:decay_norm},
\begin{equation}\label{e:djk_2^(-jd)_in_l2}
\sum^{\infty}_{j=0}\sum_{k \in \bbZ} |d^{(d)}_{j,k}|^2 < \infty.
\end{equation}
Therefore, \eqref{e:expansion_g_in_L2_in_wave} can be written as
\begin{equation}\label{e:g^_expansion_introducing_h1}
\widehat{g}(x) = \widehat{a}_0(x) \frac{\widehat{\Phi}_0(x)}{h_1(x)} + \sum^{\infty}_{j=0} \widehat{d}^{(d)}_j (2^{-j}x)\frac{1}{\|\Psi^{\#}_{j,0}\|}2^{-j/2}\widehat{\psi}(2^{-j}x),
\end{equation}
with convergence in $L^2(\bbR)$. We claim that, as a consequence of \eqref{e:g^_expansion_introducing_h1}, we can write
\begin{equation}\label{e:g^_expansion_introducing_h2}
\widehat{g}(x)h_2(x) = \widehat{a}_0(x) \frac{h_2(x)}{h_1(x)}\widehat{\Phi}_0(x) + \sum^{\infty}_{j=0} \widehat{d}^{(d)}_j (2^{-j}x)\frac{1}{\|\Psi^{\#}_{j,0}\|}h_2(x) 2^{-j/2}\widehat{\psi}(2^{-j}x),
\end{equation}
where the equality holds in the $L^2(\bbR)$ sense. In fact, by the boundedness of $h_2(x)$, $\widehat{g}(x)h_2(x) \in L^2(\bbR)$. Also, by \eqref{e:assumption_h1_h1inv_bounded_compactinterv}, \eqref{e:h2,h2inv_in_C2} and \eqref{e:assumption_h2/h1_is_periodic}, the function $h_2/h_1$ is $2\pi$-periodic and bounded. Since $\Phi_0 \in L^2(\bbR)$, then the first term on the right-hand side of \eqref{e:g^_expansion_introducing_h2} is also in $L^2(\bbR)$. Moreover, by Proposition \ref{p:ineq_vaguelet} and \eqref{e:djk_2^(-jd)_in_l2}, the second term on the right-hand side of \eqref{e:g^_expansion_introducing_h2} converges in $L^2(\bbR)$. This establishes \eqref{e:g^_expansion_introducing_h2}.


In particular, let $f \in L^2_b(\bbR)$, where
\begin{equation}\label{e:L2b}
L^2_b(\bbR) = \{ f \in L^2(\bbR): \textnormal{\textnormal{supp}\{$\widehat{f}$\} is bounded away from 0 and $\pm \infty$}\}.
\end{equation}
By \eqref{e:h2,h2inv_in_C2} and \eqref{e:bounds-h_2}, $\widehat{g}(x) := \widehat{f}(x)/h_2(x) \in L^2(\bbR)$. Therefore, the relation \eqref{e:g^_expansion_introducing_h2} holds with $\widehat{g}(x) h_2(x) = (\widehat{f}(x)/h_2(x))h_2(x) = \widehat{f}(x)$ on the left-hand side. The claim now follows from the fact that $\overline{L^2_b(\bbR)} = L^2(\bbR)$.

Alternatively, assume that $d < 0$. Once again, our goal is to show that \eqref{e:g^_expansion_introducing_h2} holds, where equality is in $L^2(\bbR)$. However, this time we impose the stronger assumption that $\widehat{g}$ lies in the Sobolev space $W^d(\bbR)$ (see \eqref{e:def_H^s} below). Then, by Proposition \ref{p:decay_norm} and Daubechies \cite{daubechies:1992}, pp.\ 298-299,
\begin{equation}\label{e:summability_djk_2^-jd_d<0}
\sum^{\infty}_{j=0}\sum_{k \in \bbZ}|d_{j,k}|^2 \|\Psi^{\#}_{j,k}\|^2  < \infty.
\end{equation}
In particular, for $d^{(d)}_{j,k} := d_{j,k}\|\Psi^{\#}_{j,k}\|$,
\eqref{e:summability_djk_2^-jd_d<0} amounts to the summability condition \eqref{e:djk_2^(-jd)_in_l2}. Now proceed as in the case $d > 0$ to establish \eqref{e:g^_expansion_introducing_h2} in the $L^2(\bbR)$ sense.

For $f \in L^{2}_{b}(\bbR)$, we claim that $\widehat{g}(x)(1 + |x|^2)^{-d/2} := (\widehat{f}(x)/h_2(x))(1 + |x|^2)^{-d/2} \in L^2(\bbR)$. In fact, by \eqref{e:bounds-h_2},
$$
\int_{|x| > \epsilon} \Big| \frac{\widehat{f}(x)}{h_2(x)} \Big|^2 (1 + |x|^2)^{-d} dx \leq C \int_{|x| > \epsilon} \Big| \frac{\widehat{f}(x)}{|x|^{-d}} \Big|^2 (1 + |x|^2)^{-d} dx < \infty.
$$
Moreover, again by \eqref{e:h2,h2inv_in_C2}, $\int_{|x| \leq \epsilon} | \frac{\widehat{f}(x)}{h_2(x)} |^2 (1 + |x|^2)^{-d} dx < \infty$.

For \eqref{e:biorth-new-func-transform-normalized}, one can use a similar argument with $\Psi^{j,k}_{\#}$, $\widehat{\Phi}^0$, $\overline{h_{1}(x)^{-1}}$, $\overline{h_2(x)^{-1}}$. $\Box$\\
\end{proof}

The next lemma will be used in the proof of Theorem \ref{t:main}. It establishes that the shifts of $\Phi_0$, $\Phi^{0}$ generate a Riesz basis of its own span.

\begin{lemma} \label{l:Phi0_gives_a_Riesz_basis_for_V0}
Suppose that the assumptions $(h1)$, $(h2)$, $(h3)$, $(W3)$, $(W4)$ hold. Then the families $\{\Phi_0(t - k)\}_{k \in \bbZ}$, $\{\Phi^0(t - k)\}_{k \in \bbZ}$ are Riesz bases of their closed linear spans $V_0$, $V^0$, respectively, in $L^2(\bbR)$.
\end{lemma}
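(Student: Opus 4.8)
The plan is to characterize the Riesz-basis property of a sequence of integer shifts $\{\Phi_0(t-k)\}_{k \in \bbZ}$ through the standard periodization criterion: the shifts of a function $F \in L^2(\bbR)$ form a Riesz basis of their closed span if and only if there exist constants $0 < A \leq B < \infty$ such that
$$
A \leq \sum_{l \in \bbZ} |\widehat{F}(x + 2\pi l)|^2 \leq B \quad \textnormal{for a.e. } x.
$$
This is the classical result found, e.g., in Daubechies \cite{daubechies:1992}, p.\ 139, or Mallat \cite{mallat:1998}. So the whole problem reduces to establishing two-sided bounds on the periodization $\sum_l |\widehat{\Phi}_0(x + 2\pi l)|^2$, and analogously for $\widehat{\Phi}^0$.

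First I would write out $\widehat{\Phi}_0(x) = h_1(x)\widehat{\phi}(x)/\|\Phi^{\#}_{0,0}\|$ explicitly and form the periodized sum. The \emph{upper bound} should follow quickly from Assumption $(W4)$: since $h_1/h_2$ is $2\pi$-periodic and bounded (by $(h1)$, $(h2)$ and $(h3)$, as already used in the proof of Lemma \ref{l:bases_are_welldefined}), we can write $|h_1(x)\widehat{\phi}(x)| \leq C|x|^{-d}|\widehat{\phi}(x)|$ for large $|x|$, and the decay estimate $|\widehat{\phi}(x)| \leq C(1+|x|)^{-(|d| + 1/2 + \zeta)}$ in \eqref{e:assumption_decay_wavelet_Fourier} is precisely what makes $\sum_l |x+2\pi l|^{-d}|\widehat{\phi}(x+2\pi l)|^2$ summable and uniformly bounded; the $|x|^{-d}$ factor combined with the $|d|$ in the exponent guarantees summability regardless of the sign of $d$. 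Near $x=0$ the boundedness of $h_1$ on compact sets from $(h1)$ handles the finitely many problematic terms.

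The \emph{lower bound} is where I expect the main obstacle, and this is exactly the role of Assumption $(W3)$. The periodized sum could in principle vanish if $\widehat{\Phi}_0$ had a common zero across the lattice $2\pi\bbZ + x$. To rule this out, I would use the congruent set $K$ from Definition \ref{def:congruent_set}: for each $x \in [-\pi,\pi]$ there is an $l$ with $x + 2\pi l \in K$, and along $K$ we have $\inf_{x \in K}|\widehat{\phi}(x)| > 0$ by \eqref{e:assumption_nontrivial_compact_set_in_support_phi}. Since $K$ is compact, $h_1$ and $h_1^{-1}$ are bounded on $K$ by $(h1)$, so $\inf_{x \in K}|h_1(x)\widehat{\phi}(x)| > 0$ as well. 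Thus for each $x$ at least one term $|\widehat{\Phi}_0(x + 2\pi l)|^2$ is bounded below by a positive constant (after dividing by the normalizing constant $\|\Phi^{\#}_{0,0}\|$, which is a fixed finite positive number by Lemma \ref{l:bases_are_welldefined}), giving the desired uniform lower bound.

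Finally, the statement for $\{\Phi^0(t-k)\}_{k \in \bbZ}$ follows by the identical argument: since $\widehat{\Phi}^0_{\#}(x) = \overline{h_1(x)^{-1}}\widehat{\phi}(x)$ and $h_1^{-1}$ enjoys the same compact-interval boundedness under $(h1)$, while the ratio $h_1^{-1}/h_2^{-1} = h_2/h_1$ is again $2\pi$-periodic and bounded, one repeats the upper and lower estimates with $h_1^{-1}$ in place of $h_1$. The remaining routine points are the density of the span inside $V_0$ (automatic from the definition of $V_0$ as the closed span) and verifying that the normalizing constants do not disturb the two-sided bounds, both of which I would dispatch briefly.
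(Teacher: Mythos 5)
Your proposal is correct and follows essentially the same route as the paper: the periodization criterion $0 < C_1 \le \sum_l |\widehat{\Phi}_0(x+2\pi l)|^2 \le C_2$, with the lower bound coming from the congruent set $K$ of Assumption $(W3)$ together with the compact-interval bounds on $h_1, h_1^{-1}$, and the upper bound from writing $h_1 = (h_1/h_2)h_2$ and combining the quasi-homogeneous bound on $h_2$ with the decay of $\widehat{\phi}$ in $(W4)$. The only cosmetic difference is that you treat both signs of $d$ in one summability estimate, whereas the paper splits into the cases $d<0$ (direct lattice sum) and $d\ge 0$ (reduction to the Riesz bound for $\widehat{\phi}$ itself).
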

\begin{proof}
We first show that $\{\Phi_0(t - k)\}_{k \in \bbZ}$ is a Riesz basis for $V_0$.
It suffices to show that there exist $C_1, C_2 > 0$ such that
\begin{equation}\label{e:Daubechies_condition_for_Riesz_basis_of_own_span}
0 < C_1 \leq \sum_{k \in \bbZ} | \widehat{\Phi}_0(x + 2 \pi k)|^2 \leq C_2 < \infty
\end{equation}
(see Daubechies \cite{daubechies:1992}, p.\ 140). Since the function $\sum_{k \in \bbZ} | \widehat{\Phi}_0(x + 2 \pi k)|^2$ has period $2\pi$, it suffices to consider $x$ in the compact set $K$ as in Definition \ref{def:congruent_set}. By \eqref{e:assumption_h1_h1inv_bounded_compactinterv} and \eqref{e:assumption_nontrivial_compact_set_in_support_phi},
$$
\sum_{k \in \bbZ}|\widehat{\Phi}_0 (x + 2 \pi k)|^2 \geq \sum_{k \in \bbZ}|h_1 (x + 2 \pi k) \widehat{\phi} (x + 2 \pi k)|^2
\geq \inf_{x \in K} |h_1 (x)|^2 \inf_{x \in K}|\widehat{\phi} (x)|^2 >0.
$$
This establishes the lower bound in \eqref{e:Daubechies_condition_for_Riesz_basis_of_own_span}.

Note that, since the shifts of $\phi$ are a Riesz basis of the closure of their own span,
\begin{equation}\label{e:phi_is_Riesz}
0 < C \leq \sum_{k \in \bbZ} | \widehat{\phi}(x + 2 \pi k)|^2 \leq C' < \infty
\end{equation}
for some $C, C'> 0$. So, write
$$
\widehat{\Phi}_0 (x + 2 \pi k) = \frac{h_1(x + 2 \pi k )}{h_2(x + 2\pi k)} h_2(x + 2 \pi k) \widehat{\phi}(x + 2 \pi k).
$$
By \eqref{e:assumption_h1_h1inv_bounded_compactinterv}, \eqref{e:h2,h2inv_in_C2} and \eqref{e:assumption_h2/h1_is_periodic}, the ratio $|\frac{h_1(x )}{h_2(x)}|$ can be bounded from above and below uniformly in $x \in \bbR$. Therefore, for some $C > 0$,
\begin{equation}\label{e:Phi_0_first_bound}
|\widehat{\Phi}_0 (x + 2 \pi k)|^2 \leq  C' |h_2(x + 2 \pi k) \widehat{\phi}(x + 2 \pi k)|^2.
\end{equation}
Assume that $d < 0$. Therefore, by \eqref{e:assumption_h1_h1inv_bounded_compactinterv}, \eqref{e:h2,h2inv_in_C2}, \eqref{e:assumption_decay_wavelet_Fourier} and \eqref{e:Phi_0_first_bound}, for some $C', C'' > 0$ we have
$$
\sum_{k \in \bbZ}|\widehat{\Phi}_0 (x + 2 \pi k)|^2 \leq C' \sum_{k \in \bbZ}\Big|(1 + |x+ 2\pi k|^{-d}) \Big(\frac{1}{1 + |x + 2 \pi k|^{-d + \frac{1}{2} + \zeta}}\Big)\Big|^2 \leq C''.
$$
Alternatively, if $d \geq 0$, by \eqref{e:assumption_h1_h1inv_bounded_compactinterv}, \eqref{e:h2,h2inv_in_C2}, the upper bound in \eqref{e:Daubechies_condition_for_Riesz_basis_of_own_span} results from \eqref{e:phi_is_Riesz}.

To show that $\{\Phi^0(t - k)\}_{k \in \bbZ}$ is a Riesz basis for $V^0$, a similar argument can be developed with $\widehat{\Phi}_0$, $\overline{h_1(x)^{-1}}$, $\overline{h_2(x)^{-1}}$, instead. $\Box$\\
\end{proof}
%
%

We are now in position to prove the main result of the paper. The argument draws upon that in Meyer et al.\ \cite{MST:1999}, pp.\ 480-481. \\

{\sc Proof of Theorem \ref{t:main}:}
We first show it for \eqref{e:new-func-transform-normalized}. Let $V_j$ and $W_j$ be as in \eqref{e:Vj_Wj}. By Lemma
\ref{l:Phij_and_etaj_to_Phij+1},
\begin{equation}
V_j \oplus W_j = V_{j+1}, \label{e:Vj_+Wj}
\end{equation}
which is a direct but not orthogonal sum. Therefore, by Proposition \ref{p:denseness},
\begin{equation}
V_j \subseteq V_{j+1}, \quad \bigcap^{\infty}_{j=0} V_j = V_0, \quad
\textnormal{and} \quad \bigcup^{\infty}_{j=0} V_j \textnormal{ is
dense in } L^2(\bbR) \label{e:union_Vj_is_dense_in_L^2}.
\end{equation}
It follows from (\ref{e:Vj_+Wj}) and
(\ref{e:union_Vj_is_dense_in_L^2}) that the space $V_0 \bigoplus_{j \geq
0} W_j$ is dense in $L^2(\bbR)$, which gives part $(ii)$ of the
definition of a Riesz basis. We now turn to showing part $(i)$ of the definition.

Suppose for the moment that there exist constants $C_2 \geq C_1 > 0$
such that
\begin{equation}
C_1 \Big(\sum_{j \geq 0}\sum_{k} b_{j,k}^2 \Big)^{1/2} \leq
\Big\|\sum_{j \geq 0} \sum_{k} b_{j,k} \eta_{j}(t - 2^{-j}k)
\Big\| \leq C_2 \Big(\sum_{j \geq 0}\sum_{k} b_{j,k}^2
\Big)^{1/2} \label{e:bounds_above_below_for_eta}
\end{equation}
for any sequence $\{b_{j,k}\} \in l^2(\bbZ)$, and that the same relation
holds for $\eta^j$. Then, since the family
$\{\Phi_0 (t-k), k \in \bbZ\}$ is a Riesz basis of $V_0$ by Lemma
\ref{l:Phi0_gives_a_Riesz_basis_for_V0}, we have
$$
\Big\|\sum_{k}a_k \Phi_0 (t-k) + \sum_{j \geq 0}\sum_{k}
b_{j,k} \eta_j (t-2^{-j}k)\Big\|
$$
\begin{equation}\label{e:ineq_sum_approx_details}
\leq \sqrt{2} \Big( \Big\|\sum_{k}a_k \Phi_0 (t-k)\Big\|^2 +
\Big\| \sum_{j \geq 0}\sum_{k} b_{j,k} \eta_j (t-2^{-j}k)
\Big\|^2 \Big)^{1/2} \leq C \hspace{1mm} \Big( \sum_{k}a^{2}_k +
\sum_{j \geq 0} \sum_{k} b^{2}_{j,k} \Big)^{1/2},
\end{equation}
for some constant $C$, which establishes the right-hand side inequality of
(\ref{e:Riesz_basis_equation_on_bounds}). The left-hand side inequality of
(\ref{e:Riesz_basis_equation_on_bounds}) may be shown in the following
way. By \eqref{e:biortho_relation} and \eqref{e:orthog_of_Phi_and_Psi}, the functions $\Phi^0(t-k), \Phi_0(t-k),
\eta^j(t-2^{-j}k),\eta_j(t-2^{-j}k), k \in \bbZ, j \geq 0$, satisfy
the relations
\begin{equation}\label{e:biorthogonality_Phi^0_eta^j}
\int_{\bbR} \Phi^0(t-k)\Phi_0(t-k') dt = \delta_{\{k=k'\}}, \quad
\int_{\bbR} \eta^j(t-2^{-j}k) \eta_{j'}(t-2^{-j'}k') dt =
\delta_{\{j=j',k=k'\}},
\end{equation}
$$
\int_{\bbR} \Phi^0(t-k)\eta_{j}(t-2^{-j}k') dt = 0 \quad
\textnormal{and} \quad \int_{\bbR} \eta^{j}(t-2^{-j}k)\Phi_0(t-k')
dt = 0, \quad j\geq 0.
$$
Then, for any sequences $\{a_{k}\}, \{b_{j,k}\} \in l^{2}(\bbZ)$,
$$
\sum_{k} a^{2}_{k} + \sum_{j \geq 0}\sum_{k} b^{2}_{j,k} =
\int_{\bbR} \Bigg(\sum_{k} a_{k}\Phi^{0}(t - k) + \sum_{j
\geq 0}\sum_{k}b_{j,k} \eta^{j}(t- 2^{-j}k) \Bigg)\cdot
$$
$$
.\Bigg(\sum_{k} a_{k}\Phi_{0}(t - k) + \sum_{j \geq 0}\sum_{k}
b_{j,k} \eta_{j}(t- 2^{-j}k) \Bigg) dt
$$
$$
\leq \Big\| \sum_{k} a_{k}\Phi_{0}(t - k) + \sum_{j
\geq 0} \sum_{k} b_{j,k} \eta_{j}(t- 2^{-j}k) \Big\| \cdot \Big\|
\sum_{k} a_{k}\Phi^{0}(t - k) + \sum_{j \geq 0} \sum_{k} b_{j,k}
\eta^{j}(t- 2^{-j}k) \Big\|
$$
$$
\leq C \Big(\sum_{k } a^{2}_{k} + \sum_{j \geq 0}\sum_{k}
b^{2}_{j,k} \Big)^{1/2} \Big\| \sum_{k} a_{k}\Phi_{0}(t - k) +
\sum_{j \geq 0} \sum_{k} b_{j,k} \eta_{j}(t- 2^{-j}k) \Big\|
$$
for some constant $C > 0$, and hence
\begin{equation} \label{e:Riesz_proof_left_bound_Phi}
\frac{1}{C} \Big( \sum_{k}  a^{2}_{k} +
 \sum_{j \geq 0}\sum_{k} b^{2}_{j,k} \Big)^{1/2} \leq \Big\|
 \sum_{k}  a_{k}\Phi_{0}(t - k) +
\sum_{j \geq 0} \sum_{k}b_{j,k} \eta_{j}(t- 2^{-j}k) \Big\|.
\end{equation}

It remains to prove (\ref{e:bounds_above_below_for_eta}). The upper bound is established for both $\Psi_{j,k}$, $\Psi^{j,k}$ in Proposition \ref{p:ineq_vaguelet}. As for the lower bound, by \eqref{e:biorthogonality_Phi^0_eta^j} and Proposition \ref{p:ineq_vaguelet},
$$
\sum_{j \geq 0}\sum_{k \in \bbZ} b^{2}_{j,k} = \int_{\bbR} \Big( \sum_{j \geq 0}\sum_{k \in \bbZ} b_{j,k} \Psi_{j,k}(t) \Big)
\Big(\sum_{j' \geq 0}\sum_{k' \in \bbZ} b_{j',k'} \Psi^{j',k'}(t)\Big) dt
$$
$$
\leq \Big\| \sum_{j \geq 0}\sum_{k \in \bbZ} b_{j,k} \Psi_{j,k} \Big\| \Big\| \sum_{j' \geq 0} \sum_{k' \in \bbZ} b_{j',k'} \Psi^{j',k'} \Big\|
\leq  \Big\| \sum_{j \geq 0}\sum_{k \in \bbZ} b_{j,k} \Psi_{j,k} \Big\| C \Big( \sum_{j' \geq 0} \sum_{k' \in \bbZ} b^{2}_{j',k'}  \Big)^{1/2}.
$$

To show that \eqref{e:biorth-new-func-transform-normalized} are a Riesz basis of $L^2(\bbR)$, a similar argument can be developed with $V^j$ and $W^j$, instead. $\Box$\\


\subsection{Connection with the Riesz property in Sobolev spaces}
\label{s:Sobolev}


It is possible to establish the Riesz property for our bases in $L^2(\bbR)$ by taking a direct approach via Sobolev spaces, i.e., without proving that the bases form vaguelets. In this section, we outline how this can be done in light of recent results found in the literature (e.g., Han and Shen \cite{han:shen:2006,han:shen:2009} and references therein).

Let
\begin{equation}\label{e:def_H^s}
W^d(\bbR) = \Big\{f : \| f \|^2_{W^d(\bbR)}= \int_{\bbR} |\widehat{f}(x)|^2 (1 + |x|^2)^{d} dx < \infty \Big\}, \quad d \in \bbR,
\end{equation}
be the (Sobolev) space of functions.
The factor $(1 + |x|^2)^{d}$ above naturally connects with the bases investigated in this work. Indeed, under the assumptions \eqref{e:h2,h2inv_in_C2} and \eqref{e:bounds-h_2}, there are constants $c_2 \geq c_1>0$ such that
\begin{equation}\label{e:h2-sobolev}
    c_1 \leq \frac{h_2(x)}{(1+|x|^2)^{d/2}} \leq c_2,\quad \mbox{for all}\ x\in \bbR.
\end{equation}

Han and Shen \cite{han:shen:2009} introduced a novel approach for the study of the Riesz property that involves generating two families of wavelet-based functions $\{2^{-jd}\psi_{j,k}\}$ and $\{2^{jd}\widetilde{\psi}_{j,k}\}$. Instead of requiring both systems to be Riesz bases of $L^2(\bbR)$, and thus imposing the same assumptions on the properties of both, one of them can have the desired order of regularity while the other have the desired number of vanishing moments. Moreover, they are biorthogonal with respect to the inner product $\langle f,g\rangle := (2\pi)^{-1}\int_{\bbR}\widehat{f}(x)\overline{\widehat{g}(x)}dx$, $f \in W^d(\bbR)$, $g \in W^{-d}(\bbR)$, and, most importantly, are Riesz bases for the appropriate Sobolev spaces.

More precisely, under mild assumptions on the underlying so-called masks, the authors showed that $\{2^{-jd}\psi_{j,k}\}$ is a Riesz basis for $W^d(\bbR)$, i.e.,
\begin{itemize}
\item [\textit{(i')}] for $0 < C_1 \leq C_2$,
\begin{equation}\label{e:bounds_Riesz_Sobolev}
C_1 \sum^{\infty}_{j,k = -\infty} |d_{j,k}|^2 \leq \Big\| \sum^{\infty}_{j,k = -\infty} d_{j,k}2^{-jd}\psi_{j,k} \Big\|^2_{W^d(\bbR)} \leq C_2 \sum^{\infty}_{j,k = -\infty} |d_{j,k}|^2
\end{equation}
for compactly supported sequences  $\{d_{j,k}\}$;
\item [\textit{(ii')}] $\textnormal{span}\{\psi_{j,k}\}$ is dense in $W^d(\bbR)$,
\end{itemize}
and similarly that $\{2^{jd}\widetilde{\psi}_{j,k}\}$ is a Riesz basis for $W^{-d}(\bbR)$. Note that we do not include approximation terms in (\ref{e:bounds_Riesz_Sobolev}) and in the basis $\{2^{-jd}\psi_{j,k}\}$. These terms are included in Han and Shen \cite{han:shen:2009} with index $j$ ranging only from $0$ to $\infty$, but they cannot be related directly to our approximation functions $\Phi^\#_{j,k}$.

Taking $\{d_{j,k}\} \in l^2$ in (\ref{e:bounds_Riesz_Sobolev}) and using (\ref{e:h2-sobolev}) yield
\begin{equation}\label{e:bounds_Riesz_Sobolev_2}
C_1 \sum^{\infty}_{j,k = -\infty} |d_{j,k}|^2 \leq \Big\| \sum^{\infty}_{j,k = -\infty} d_{j,k}2^{-jd}\Psi^\#_{j,k} \Big\|^2_{L^2(\bbR)} \leq C_2 \sum^{\infty}_{j,k = -\infty} |d_{j,k}|^2,
\end{equation}
where $\Psi^\#_{j,k}$ are defined in (\ref{e:psihat_uppersharp}). In view of Proposition \ref{p:decay_norm} (and supposing that this proposition extends to $j<0$), (\ref{e:bounds_Riesz_Sobolev_2}) is equivalent to the Riesz property $(i)$ of the family of functions $\{\Psi_{j,k}\}$. The denseness condition $(ii)$ of the Riesz property can be deduced similarly.

\bibliography{riesz}

\small

\bigskip

\noindent \begin{tabular}{lll}
Gustavo Didier & St\'{e}phane Jaffard & Vladas Pipiras \\
Mathematics Department  & Universit\'{e} Paris-Est  & Dept.\ of Statistics and Operations Research \\
Tulane University & Cr\'{e}teil Val-de-Marne  & UNC at Chapel Hill \\
6823 St.\ Charles Avenue  & B\^{a}timent P2, Etage 3, bureau 351  & CB\#3260, Smith Bldg. \\
New Orleans, LA 70118, USA & 61 avenue du G\'{e}n\'{e}ral de Gaulle  & Chapel Hill, NC 27599, USA \\
& 94010, Cr\'{e}teil, France  & \\
{\it gdidier@tulane.edu}& {\it stephane.jaffard@u-pec.fr}  & {\it pipiras@email.unc.edu} \\
\end{tabular}\\

\smallskip

\end{document}